\theoremstyle{plain}
\newtheorem{theor}{Theorem}[section]
\newtheorem{Theor}{Theorem}[section]{}
\newtheorem{lemma}{Lemma}[section]
\newtheorem{corol}{Corollary}[section]
\theoremstyle{definition}
\newtheorem{defin}{Definition}[section]
\newtheorem{notat}{Notation}[section]
\newtheorem{quest}{Question}[section]
\newtheorem*{claim}{Claim}
\DeclareMathOperator{\cof}{\bold{cof}}
\DeclareMathOperator{\ran}{\bold{range}}
\DeclareMathOperator*{\suc}{\bold{successor}}
\newcommand{\AND}{\text{ and }}
\newcommand{\angbr}[1]{\langle #1 \rangle}
\newcommand{\forces}[2]{\Vdash_{#1} \mbox{``} #2 \mbox{''}}
\newcommand{\SetOf}[2]{\left\{#1 \ \left| \ #2 \right.\right\}}
\newcommand{\Poset}{{\mathbb P}}
\author[Raghavan]{Dilip Raghavan}
\thanks{Research leading to the results of this paper was carried out when the first author was a Fields Visiting Scholar.
The first author thanks the Fields Institute for its kind hospitality.
The first author was partially supported by the Singapore Ministry of Education's research grant number A-8001467-00-00}
	\address[Raghavan]{Department of Mathematics\\
		National University of Singapore\\
		Singapore 119076.} 
\email{\href{dilip.raghavan@protonmail.com}{dilip.raghavan@protonmail.com}}
\urladdr{\url{https://dilip-raghavan.github.io/}}
\author[Stepr{\= a}ns]{Juris Stepr{\= a}ns}
\thanks{The second author is partially supported by an NSERC Discovery grant}
\address[Stepr{\= a}ns]{Department of Mathematics and Statistics\\ York University\\ 4700 Keele St.\@\\ Toronto, ON M3J 1P3, Canada.}
\email{\href{steprans@yorku.ca}{steprans@yorku.ca}}
\urladdr{\url{https://www.yorku.ca/science/profiles/faculty/juris-steprans/}}
\title{Adding ultrafilters to Shelah's model for no nowhere dense ultrafilters}
\date{June 2024}
\begin{document}
\begin{abstract}
 We exhibit a forcing for producing a model with no nowhere dense ultrafilters that satisfies the full Sacks Property.
 By interleaving this forcing with other forcing notions, a model containing a $(2, {\aleph}_{0})$-selective ultrafilter, but no nowhere dense ultrafilters is produced.
 It is thus proved that the existence of $(2, {\aleph}_{0})$-selective ultrafilters does not imply the existence of nowhere dense ultrafilters.
\end{abstract}
\maketitle
\section{Introduction}\label{ievads}
The history of obtaining models of set theory with no P-points consists of an interesting sequence of incremental results, most of which are still of interest today in spite of stronger subsequent results.
The sequence begins with Kunen's proof \cite{MR0427070} that there are no selective ultrafilters after adding $\aleph_2$ random reals to a model  of the Continuum Hypothesis.
While this result was later improved by the celebrated models of Shelah that do not even contain P-points \cite{MR1623206}, the question of whether there are actually P-points in the random model remains of interest (see Dow~\cite{dow}). 
Following Shelah's results, it was shown by Mekler in \cite{MR759670}  that in the original model of Shelah there are not even any P-measures.
In a recent work, Borodulin-Nadzieja, Cancino-Manr{\'i}quez and Morawski~\cite{arXiv:2401.14042} have produced a model where there are no P-points, but P-measures exist.

From these considerations researchers were led to ask the question of whether ultrafilters weaker than P-points, but with similar properties, can be shown to exist without assuming extra set theoretic axioms.
A well known major accomplishment in the positive direction is Kunen's $\mathrm{ZFC}$ construction of weak P-points \cite{MR588822}.
The Tukey ordering provides another path to a weaker notion.
P-points are not Tukey maximal among ultrafilters on $\omega$, and it is not known whether a Tukey non-maximal ultrafilter can be constructed in $\mathrm{ZFC}$.
The reader may consult~\cite{sanu-survey} for recent results about the Tukey ordering on ultrafilters.

However weakening the P-point notion along yet different lines is known not to produce definitions of ultrafilters whose existence can be established without appealing to extra axioms.
The following definition was motivated by van Douwen in \cite{MR0627526} and studied by Baumgartner in \cite{MR1335140} and provides a large class of weakenings of the P-point property.
\begin{defin}\label{BGDJun2a}
If $\mathcal I$ is an ideal on the set $X$ then an ultrafilter
$\mathcal{U}$ is known as an \emph{$\mathcal I$-ultrafilter} if for every
function $F:\omega\to X$ there is some $A\in \mathcal{I}$ such that
$F^{-1}(A)\in \mathcal U$.
\end{defin}
Observe that the definition is equivalent to saying that for every $F: \omega \rightarrow X$, there exists $B \in \mathcal{U}$ so that $F\left[B\right] \in \mathcal{I}$.
Letting $\mathbb F$ be the ideal of finite subsets 
it is not hard to see that an ultrafilter is a P-point if and only if it is an $\mathbb F\otimes\mathbb F$-ultrafilter.
Note also that the larger the ideal $\mathcal I$ is, the weaker the property of being an $\mathcal I$-ultrafilter. So, for example, if $\mathcal I = \mathcal U^*$ then an ultrafilter $\mathcal V$ is a $\mathcal I$-ultrafilter if and only if $\mathcal U\not\leq_{\sf RK}\mathcal V$ and, hence, $\mathcal I$-ultrafilters abound in this case.
But for large, but not maximal, definable ideals such as $\mathbb{NWD}$,  the ideal of nowhere dense subsets of $\mathbb Q$,  the question of the existence of $\mathbb{NWD}$-ultrafilters --- also known as nowhere dense ultrafilters --- becomes more interesting.
After long being open, Shelah was able to prove the consistency of no nowhere dense ultrafilters in  \cite{MR1690694}.

The techniques used in \cite{MR1690694} were quite different from those used in the earlier constructions of models without P-points and were more reminiscent of the ideas introduced in \cite{MR1077075} and \cite{MR1175937}. It very soon became apparent that the model of
\cite{MR1690694} eliminated more types of ultrafilters than just the nowhere dense ones. For example, in that model there are also no
ultrafilters with Property~M ---
an ultrafilter $\mathcal{U} $ will be said to have {\em Property~M} if
for all $\epsilon>0$ and all sequences  $\{A_i\}_{i\in\omega}$ of sets of Lebesgue measure greater than $\epsilon$ there is $X\in \mathcal{U}$ such that $\bigcap_{n\in X}A_n\neq \varnothing$.
The failure of ultrafilters to have this weak P-point type property --- it  is shown in \cite{MR1682072} that ultrafilters with Property~M need not be P-points --- might lead one to conjecture that Shelah's model for no nowhere dense ultrafilters will not harbour any ultrafilters with certain weak P-point type properties.
In fact, the model in \cite{MR1690694} does not contain $\mathcal{I}$-ultrafilters for any $\mathcal{I}$ that is Kat{\v e}tov below $\mathbb{NWD}$.
For example, there are no discrete ultrafilters in this model; in other words, there are $\mathcal{I}$-ultrafilters for $\mathcal{I} = \{A \subseteq {2}^{\omega}: A \ \text{is discrete}\}$.
It remains open whether the existence of nowhere dense ultrafilters implies the existence of discrete ultrafilters, although Shelah proved in~\cite{MR1623206} that it does not imply the existence of P-points.
The reader may consult Brendle~\cite{MR1729447} for further information on the spectrum of $\mathcal{I}$-ultrafilters falling between P-points and nowhere dense ultrafilters.

The evidence does not seem to support this conjectured dearth of special ultrafilters in Shelah's model.
This paper has two goals.
The first goal will be addressed in \S\ref{Ex} which will provide an exposition of the model of \cite{MR1690694} with some simplifications and streamlined notation. However, the reader who hopes to use the ideas of \cite{MR1690694} is advised to consult the original since it contains subtleties, not needed for the result about nowhere dense ultrafilters, which may prove to have uses elsewhere.
The simplifications will demonstrate that a forcing with the Sacks Property, which Shelah's original forcing from \cite{MR1690694} does not have, can be used to kill all nowhere dense ultrafilters.
The second goal is to show how to modify the model of \cite{MR1690694} to obtain models containing ultrafilters  weakening the notion of selectivity, which were studied by Barto\v{s}ov\'{a} and Zucker in \cite{MR3946661}.
The main result of \S\ref{VarsrSect} will prove that the existence of ultrafilters satisfying the weak form of selectivity considered by Barto\v{s}ov\'{a} and Zucker does not imply the existence of any nowhere dense ultrafilters.
It is not yet known whether these can be shown to exist without assuming some extra axioms of set theory.

\section{Review of Shelah's model with no nowhere dense ultrafilters}
\label{Ex}
\subsection{Definition of the partial order}
A key part of the argument of \cite{MR1690694}
 establishing the consistency of no nowhere dense ultrafilters hinged on the Weak Sacks Property. This is a
 simpler instance of the more general PP property of \S2.12A of Chapter~VI of \cite{MR1623206} which has 
 various applications and is bound to have many as yet undiscovered applications as well. However, in this exposition it will be shown how to use the much better understood Sacks Property to achieve the same results, even though the Weak Sacks Property will be needed in \S\ref{VarsrSect}. The following partial order is at the heart of the argument.

\begin{defin}\label{Sheas31ah7ib}
	If $\mathcal {I}$ is an ideal on $\omega$ define
	$\Poset(\mathcal {I})$ to consist of all functions $\sigma$
	defined on $\omega$	 such that  there is $d:\omega\to \omega$ satisfying that for each $n\in \omega$ 
	\begin{enumerate}
		\item $d(n) \leq n$
		\item $\sigma(n) : [d(n),n)\to 2$\label{Excav111}
		\item $d^{-1}(n) \in \mathcal {I}$.
	\end{enumerate}
	For $\sigma \in \Poset(\mathcal {I})$ let $d_\sigma $ denote the function 
	witnessing that $\sigma \in \mathbb P(\mathcal {I})$.
 Note that ${d}_{\sigma}$ is uniquely determined by $\sigma$.
	For $\sigma$ and $\tau$ in 	$\Poset(\mathcal {I})$ define
	$\sigma\leq \tau$ if 
	\begin{enumerate}[resume]
		\item $\sigma(n)\supseteq \tau(n)$ for all $n\in \omega$
		\item\label{BDragJG5aaa} there is $e:\omega \to \omega$ such that $e(n)\leq  n$
		and $d_\sigma(n) = e\circ d_\tau(n)$ 
		for all $n$ 
		\item \label{BDragJG5}if $d_\tau(n) = d_\tau(m) $  then 
		$\sigma(n)\restriction [d_\sigma(n), d_\tau(n)) = \sigma(m)\restriction [d_\sigma(n), d_\tau(n))$ or, in other words,
		if $d_\tau(n) = d_\tau(m)=z $  then 
		$\sigma(n)\restriction [e(z), z) = \sigma(m)\restriction [e(z),z)$.
	\end{enumerate} 
	
\end{defin}

\begin{lemma}\label{DRaudzAin1}
	$ \Poset(\mathcal {I})$ is a partial order.
\end{lemma}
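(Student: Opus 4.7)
The plan is to verify reflexivity, antisymmetry, and transitivity of $\leq$ by constructing appropriate witnesses $e$ for condition (\ref{BDragJG5aaa}) and by carefully tracking the partial coherence required by condition (\ref{BDragJG5}).

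For reflexivity, given $\sigma \in \Poset(\mathcal{I})$, I would take $e$ to be the identity on $\omega$. Then $e(n)\leq n$ and $d_\sigma(n)=e(d_\sigma(n))$ hold trivially, clause (\ref{BDragJG5aaa}) holds by $\sigma(n)\supseteq\sigma(n)$, and clause (\ref{BDragJG5}) is vacuous because $[e(z),z)=\varnothing$ whenever $e(z)=z$. For antisymmetry, if both $\sigma\leq\tau$ and $\tau\leq\sigma$, then clause (\ref{BDragJG5aaa}) gives $\sigma(n)\supseteq\tau(n)$ and $\tau(n)\supseteq\sigma(n)$ for every $n$, so $\sigma(n)=\tau(n)$ and hence $\sigma=\tau$.

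The real content is transitivity. Suppose $\sigma\leq\tau$ is witnessed by $e_1$ and $\tau\leq\rho$ is witnessed by $e_2$. I would propose $e:=e_1\circ e_2$ as the witness that $\sigma\leq\rho$. Clause (\ref{BDragJG5aaa}) for $\sigma$ and $\rho$ is immediate from chaining the containments. For clause (\ref{BDragJG5aaa})-no, for clause (6), $e(n)=e_1(e_2(n))\leq e_2(n)\leq n$, and $e\circ d_\rho=e_1\circ e_2\circ d_\rho=e_1\circ d_\tau=d_\sigma$.

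The main obstacle is verifying clause (\ref{BDragJG5}) for the composite. Fix $n,m$ with $d_\rho(n)=d_\rho(m)=z$, and set $y:=e_2(z)=d_\tau(n)=d_\tau(m)$. I want to show $\sigma(n)$ and $\sigma(m)$ agree on $[e_1(y),z)=[e(z),z)$. I would split this interval into $[e_1(y),y)$ and $[y,z)$. On the first piece, clause (\ref{BDragJG5}) for $\sigma\leq\tau$ (applied at the level $y$) yields directly $\sigma(n)\restriction[e_1(y),y)=\sigma(m)\restriction[e_1(y),y)$. On the second piece, since $z\leq n$ and $z\leq m$ (because $d_\rho(n)\leq n$), the containments $\sigma(n)\supseteq\tau(n)$ and $\sigma(m)\supseteq\tau(m)$ give $\sigma(n)\restriction[y,z)=\tau(n)\restriction[y,z)$ and $\sigma(m)\restriction[y,z)=\tau(m)\restriction[y,z)$, while clause (\ref{BDragJG5}) for $\tau\leq\rho$ (at the level $z$, with $e_2$) gives $\tau(n)\restriction[y,z)=\tau(m)\restriction[y,z)$. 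Concatenating these equalities on the two subintervals completes the verification of clause (\ref{BDragJG5}), and hence transitivity.
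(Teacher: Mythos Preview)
Your argument is correct and follows essentially the same route as the paper: the paper too verifies only transitivity, composes the two witnessing functions, and splits the interval $[\bar{e}(e(z)),z)$ into $[e(z),z)$ (where one uses $\sigma\supseteq\tau$ together with condition~(\ref{BDragJG5}) for $\tau\leq\rho$) and $[\bar{e}(e(z)),e(z))$ (where one uses condition~(\ref{BDragJG5}) for $\sigma\leq\tau$). Your write-up has a few label slips---you cite (\ref{BDragJG5aaa}) in several places where you mean the containment clause~(4), and the ``-no, for clause~(6)'' fragment should be cleaned up since what you verify there is in fact clause~(\ref{BDragJG5aaa})---but the mathematics is sound.
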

\begin{proof}
	It only needs to be verified that $ \Poset(\mathcal {I})$ is transitive, so suppose that
	$\sigma\leq \tau\leq \theta$. Then there are $e$ and $\bar{e}$ such that
	$d_\tau = e\circ d_\theta$ and $d_\sigma = \bar{e}\circ d_\tau$. To see that Condition~(\ref{BDragJG5}) holds suppose that
	$d_\theta(n) = d_\theta(m) =z$ and
	$\bar{e}\circ e(z)\leq i < z$. Then
	if $ e(z)\leq i < z$ it follows that
	$\sigma(m)(i) = \tau(m)(i) = \tau(n)(i) = \sigma(n)(i)$. On the other hand, if 
	$ \bar{e}( e(z))\leq i < e(z)$
	then $d_\tau(n)  = e(z) = 
	d_\tau(m)$ and hence $\sigma(m)(i) = \sigma(n)(i)$ because $\sigma\leq \tau$.
\end{proof}

\begin{defin}\label{Janh68938h}
	
	For $\sigma$ and $\tau$ in 	$\Poset(\mathcal {I})$ 
	and $a\in [\omega]^{<\aleph_0}$ 
	define
	$\sigma\leq_a \tau$ if
	$\sigma \leq \tau$ and 	$d_\sigma^{-1}\{i\} = d_\tau^{-1}\{i\} $
	for all $i\in a$.
\end{defin}

\begin{defin}\label{SecfgUja682j}
	For $\sigma\in \Poset(\mathcal {I})$, $S\in [\omega]^{<\aleph_0}$ and ${g}\in \prod_{j\in S}2^j$
	define 
	$\sigma[g]$ by
	$$\sigma[g](\ell) = \begin{cases}	
	  	\sigma(\ell) & \text{ if } d_\sigma(\ell)\notin S\\
		{g}(d_\sigma(\ell))\cup \sigma(\ell) &  \text{ if } d_\sigma(\ell) \in S .\\
	\end{cases}$$
If $S\subseteq \omega$ and  $\vec{g}$ is defined on
$ d_\sigma^{-1}(S)$ and $\vec{g}(i) \in 2^j$ whenever $d_\sigma(i) = j\in S$ then let
$\sigma[\vec{g}]$ be defined by
$$\sigma[\vec{g}](\ell) = \begin{cases}	
	\sigma(\ell) & \text{ if } d_\sigma(\ell)\notin S\\
	\vec{g}(d_\sigma(\ell))\cup \sigma(\ell) &  \text{ if } d_\sigma(\ell) \in S .\\
\end{cases}$$
\end{defin}

While the following lemma concluding that $\sigma[{g}]\leq \sigma$  is immediate,
 it may not be the case that $\sigma[\vec{g}]\leq \sigma$ because 
Condition~(\ref{BDragJG5}) of Definition~\ref{Sheas31ah7ib} may fail. 
For example, note that it is not claimed 
in Lemma~\ref{2a9Tu8777hDrh75}
that $\sigma[\vec{g}]\leq \sigma$
 only that 
$		{\sigma}[\vec{g}] \leq {\tau}$ since
this is a case where
Definition~\ref{Sheas31ah7ib} may fail. 
When it is necessary to use  $S$ in the context of Definition~\ref{SecfgUja682j} it will usually be a singleton, but there is a crucial point, Corollary~\ref{CCOOR777hDrh75}, at which an infinite $S$ will be needed.
\begin{lemma}\label{Tu8777hDrhjCCI}
	If 	$\sigma\in \Poset(\mathcal I)$ and $n\in\omega$ and
	${g} \in \prod_{j\in n}2^j$ then $\sigma[g]\leq \sigma$. 		
\end{lemma}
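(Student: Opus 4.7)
The plan is to verify $\sigma[g]\in\Poset(\mathcal{I})$ by exhibiting the right witness $d_{\sigma[g]}$, and then to check conditions~(4), (5), and~(6) of Definition~\ref{Sheas31ah7ib} one by one. First I would track domains: when $d_\sigma(\ell) < n$, the function $g(d_\sigma(\ell)) \in 2^{d_\sigma(\ell)}$ has domain $[0, d_\sigma(\ell))$ and glues to $\sigma(\ell) : [d_\sigma(\ell), \ell) \to 2$ to yield a function with domain $[0, \ell)$; when $d_\sigma(\ell) \geq n$ the domain is unchanged. This forces the definition $d_{\sigma[g]}(\ell) = 0$ when $d_\sigma(\ell) < n$ and $d_{\sigma[g]}(\ell) = d_\sigma(\ell)$ otherwise. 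To confirm that $\sigma[g]$ lies in $\Poset(\mathcal{I})$, I would observe that $d_{\sigma[g]}^{-1}\{0\} = \bigcup_{k<n} d_\sigma^{-1}\{k\}$ is a finite union of sets in $\mathcal{I}$, that the fibers over $k \geq n$ agree with those of $d_\sigma$ and so already lie in $\mathcal{I}$, and that the fibers over $0 < k < n$ are empty.

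For condition~(\ref{BDragJG5aaa}) I would define $e : \omega \to \omega$ by $e(k) = 0$ for $k < n$ and $e(k) = k$ for $k \ge n$; then $e(k) \leq k$ and $d_{\sigma[g]} = e \circ d_\sigma$. Condition~(4) is immediate from $g(d_\sigma(\ell)) \cup \sigma(\ell) \supseteq \sigma(\ell)$. For condition~(\ref{BDragJG5}) I would split on $z = d_\sigma(\ell_1) = d_\sigma(\ell_2)$: if $z \geq n$ then $[e(z), z) = \emptyset$ and nothing needs to be checked; if $z < n$ then $[e(z), z) = [0, z)$ and $\sigma[g](\ell_i) \restriction [0, z) = g(z)$ for both $i = 1, 2$, so the two restrictions coincide.

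I anticipate no real obstacle here: once the correct $d_{\sigma[g]}$ and $e$ are written down, each of the three conditions reduces to transparent bookkeeping. The only point worth highlighting is why condition~(\ref{BDragJG5}) succeeds for $\sigma[g]$: the agreement interval $[e(z), z)$ sits entirely below $d_\sigma(\ell_i)$, so the values of $\sigma(\ell_i)$ play no role there and only the shared value $g(z)$ is seen. This is precisely the phenomenon that can break down when $g$ is replaced by a $\vec{g}$ defined on all of $d_\sigma^{-1}(S)$, matching the warning given in the excerpt just before the lemma.
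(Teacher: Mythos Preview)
Your proposal is correct. The paper does not prove this lemma at all, declaring it ``immediate'' in the sentence preceding the statement; your argument simply spells out the routine verification, and every step---the identification of $d_{\sigma[g]}$, the choice of $e$, and the case split on whether $z<n$ for condition~(\ref{BDragJG5})---is sound and is exactly what the paper has in mind.
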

\subsection{Properties of the partial order}
This section will establish that the partial orders are proper and have the Sacks Property. This is the key difference between the partial order to be presented here and that of Shelah's original argument; as will be seen in \S\ref{renaj6}, the partial order of \cite{MR1690694} does not enjoy the Sacks Property.
\begin{lemma}
	\label{2a9Tu8777hDrh75}
	Suppose that
 \begin{itemize}
 \item $\tau\in \Poset(\mathcal I)$ 
\item $ N\in \omega $
 \item  $\ran(d_{\tau})\cap N = A$ 
 \item    $D\subseteq \Poset(\mathcal I)$ is a dense set.
 \end{itemize}
 Then there are
	$\sigma\leq_{A} \tau$ and   $W\subseteq D$ 
 and $\vec{g}:  d_\sigma^{-1}\{N\} \to 2^{N}$
 such that:
	\begin{itemize}
	 \item 
	$|W|\leq 2^{|A|\max(A)}$ 
	\item   
$		{\sigma}[\vec{g}] \leq {\tau}$ 
 \item  $W$ is predense below ${\sigma}[\vec{g}]$.
	\end{itemize}
\end{lemma}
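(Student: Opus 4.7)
The plan is a finite fusion over the set of ``profiles'' that determine the possible commitments of $\vec{g}$ on $[0,N)$. Let $H := \prod_{z \in A} 2^z$; since $\sum_{z \in A} z \leq |A|\cdot \max(A)$, we have $|H| \leq 2^{|A|\cdot \max(A)}$. Enumerate $H = \{\vec{h}_1, \ldots, \vec{h}_K\}$ with $K = |H|$.

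First I would inductively construct a decreasing sequence $\tau = \tau_0 \geq_A \tau_1 \geq_A \cdots \geq_A \tau_K$ together with elements $w_i \in D$ for $i = 1,\ldots,K$. At stage $i$, given $\tau_{i-1}$, define $\vec{g}_i$ on $d_{\tau_{i-1}}^{-1}\{N\}$ from the profile $\vec{h}_i$: for each $\ell$ with $d_\tau(\ell) = z$, set $\vec{g}_i(\ell) \restriction [0, z) = \vec{h}_i(z)$ when $z \in A$, and $\vec{g}_i(\ell) \restriction [z, N) = \tau(\ell) \restriction [z, N)$ when $z < N$; when $d_\tau(\ell) \geq N$, use a canonical value depending only on $d_\tau(\ell)$, to preserve condition~(\ref{BDragJG5}). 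Using (\ref{BDragJG5}) one verifies that $\tau_{i-1}[\vec{g}_i] \leq \tau$ is a bona fide condition. Apply density of $D$ to find $w_i \in D$ with $w_i \leq \tau_{i-1}[\vec{g}_i]$. Finally, choose $\tau_i \leq_A \tau_{i-1}$ so that the non-$A$ column structure of $w_i$ is absorbed into $\tau_i$, ensuring that $w_i$ remains compatible with all subsequent refinements.

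Setting $\sigma := \tau_K$, $W := \{w_1,\ldots,w_K\}$, and choosing $\vec{g}$ to be the canonical profile-$\vec{h}_1$ extension relative to $\sigma$, the cardinality bound $|W|\le 2^{|A|\max(A)}$ and the inequality $\sigma[\vec{g}] \leq \tau$ are immediate. For predensity of $W$ below $\sigma[\vec{g}]$, I would argue as follows: given any $\rho \leq \sigma[\vec{g}]$, condition~(\ref{BDragJG5}) of $\rho \leq \tau$ forces $\rho$'s commitments on $[0, z)$ at each $d_\tau$-class with $z \in A$ to be a single function in $2^z$; collecting these yields a profile $\vec{h} \in H$, which equals some $\vec{h}_i$, and the coherence maintained through the fusion ensures that $w_i$ and $\rho$ share a common refinement.

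The principal obstacle is step~4 of the induction---arranging $\tau_i \leq_A \tau_{i-1}$ so that the element $w_i$ (chosen for one specific profile $\vec{h}_i$) remains predensely available even after later stages further refine $\tau$ and eventually fix $\vec{g}$ to match $\vec{h}_1$. Since the relation $\leq_A$ preserves every $A$-class position and only permits redistribution of columns outside $A$, the task is to absorb the ``$N$-column'' data of $w_i$ without disturbing the $A$-class structure. Once this coordination succeeds, the pigeonhole-like predensity argument matches each $\rho$'s induced profile with its corresponding $w_i$, and no further obstacles remain.
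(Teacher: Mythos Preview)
Your high-level plan --- a finite fusion indexed by the profiles in $\prod_{z\in A}2^z$, attaching one witness from $D$ to each profile --- is exactly the paper's strategy, and your cardinality bound is correct. But the execution is off in a way that breaks the argument. You propose to define $\vec g_i$ on $d_{\tau_{i-1}}^{-1}\{N\}$ and form $\tau_{i-1}[\vec g_i]$ via the \emph{second} version of Definition~\ref{SecfgUja682j}. At stage~$1$, however, $\tau_0=\tau$ and nothing says $N\in\ran(d_\tau)$; typically $d_\tau^{-1}\{N\}=\varnothing$, so $\tau_0[\vec g_1]=\tau$ and the profile $\vec h_1$ is never used --- which later kills the predensity argument for that profile. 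More generally, the $N$-fibre is the \emph{output} of the construction, not something you have at intermediate stages. The paper instead applies each profile to the $a$-fibres (where $a=A\setminus\{0\}$): for $g_k\in\prod_{j\in a}2^j$ it forms $\sigma_k[g_k]$ via the \emph{first} version of Definition~\ref{SecfgUja682j}, which by Lemma~\ref{Tu8777hDrhjCCI} automatically satisfies $\sigma_k[g_k]\le\sigma_k$. The paper explicitly warns (just before Lemma~\ref{Tu8777hDrhjCCI}) that the $[\vec g]$-operation you rely on need not yield a stronger condition.

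The step you correctly identify as the ``principal obstacle'' is where all the real work is, and your description of it is too vague to succeed. The paper's recipe is concrete: given $\bar\sigma\le\sigma_k[g_k]$ in $D$, set $\sigma_{k+1}(n)$ equal to $\sigma_k(n)$ on the $a$-fibres, to $\bar\sigma(n)$ off the $a$-fibres of $\bar\sigma$, and to $0_n\cup\bar\sigma(n)$ on the columns that $\bar\sigma$ pushed into $a$ but $\sigma_k$ had not. The bulk of the proof is the case analysis verifying $\sigma_{k+1}\le_a\sigma_k$ (Condition~(\ref{BDragJG5}) is delicate here) and $\sigma_{k+1}[g_k]\le\bar\sigma$. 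Only at the very end does the $N$-fibre appear: one relabels $d_{\sigma_M}^{-1}\{0\}\setminus d_\tau^{-1}\{0\}$ as the $N$-fibre of $\sigma$, and $\vec g$ is read off as $\sigma_M(i)\restriction N$ on that set. In particular your final choice ``$\vec g=$ the profile-$\vec h_1$ extension'' is not right; $\vec g$ must record what the fusion actually filled in, not any single profile.
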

\begin{proof}
	Let $a= A\setminus \{0\}$ and let  $\{g_k\}_{k\in M}$ enumerate 
 $\prod_{j\in a}2^{j}$ and observe that
 $M\leq 2^{|A|\max(A)}$.
 Now construct inductively conditions
$\sigma_k$ for $k\leq M$ such that 
\begin{itemize}
\item  $\sigma_0= \tau$
	\item $\sigma_{k+1} \leq_{a} \sigma_k$
	\item  $\sigma_{k+1}\left[g_k \right]
	\in D$.
\end{itemize}
To complete the inductive argument suppose that
$\sigma_k$ has been found. 
Then $\sigma_k[g_k]\leq \sigma_k$ by Lemma~\ref{Tu8777hDrhjCCI} and so it is possible to find  $\bar{\sigma}\leq \sigma_k[g_k]$ such that
$\bar{\sigma}\in D$. Then let
$\sigma_{k+1}$ be defined by
$$\sigma_{k+1}(n) = \begin{cases}
\bar{\sigma}(n) &\text{ if } d_{\bar{\sigma}}(n)\notin a\\
0_n \cup \bar{\sigma}(n) &\text{ if } d_{\bar{\sigma}}(n)\in a \AND d_{\sigma_k}(n)\notin a\\
\sigma_k(n) &\text{ if }  d_{\sigma_k}(n)\in a
\end{cases}$$
where $0_n$ denotes the function with domain  $d_{\bar{\sigma}}(n)$ and constant value 0. 
To see  that $\sigma_{k+1}[g_k]\leq \bar{\sigma}$ simply note that
$$\sigma_{k+1}[g_k](n) = \begin{cases}
	\bar{\sigma}(n) &\text{ if } d_{\bar{\sigma}}(n)\notin a\\
	0_n \cup \bar{\sigma}(n) &\text{ if } d_{\bar{\sigma}}(n)\in a \AND d_{\sigma_k}(n)\notin a\\
	\sigma_k(n)\cup g_k(n)=\bar{\sigma}(n) &\text{ if }  d_{\sigma_k}(n)\in a .
\end{cases}$$
 It follows that $\sigma_{k+1}[g_k]\in D$.

To see  that $\sigma_{k+1}\leq_a  {\sigma}_k$ it suffices to show that $\sigma_{k+1}\leq  {\sigma}_k$.
$$d_{\sigma_{k+1}}(n) = 
\begin{cases}
d_{\bar{\sigma}}(n) &\text{ if } d_{\bar{\sigma}}(n)\notin a\\
0 &\text{ if } d_{\bar{\sigma}}(n)\in a \AND d_{\sigma_k}(n)\notin a\\
d_{\sigma_k}(n) &\text{ if }  d_{\sigma_k}(n)\in a .
\end{cases}$$
In order to verify Condition~(\ref{BDragJG5}) of Definition~\ref{Sheas31ah7ib} suppose that $d_{\sigma_k}(n) = d_{\sigma_k}(m)$ and $d_{\sigma_{k+1}}(n) \leq i <
d_{\sigma_k}(n)$.
It is immediate that $d_{\sigma_k}(n) = d_{\sigma_k}(m)\notin a$ and therefore that
\begin{equation}\label{June14ajkshd5}
d_{\sigma_k[g_k]}(n) = d_{\sigma_k}(n) \And
 d_{\sigma_k[g_k]}(m) = d_{\sigma_k}(m) .
\end{equation}
Moreover, if
$d_{\bar{\sigma}}(n) \notin a$ then
$$d_{\bar{\sigma}}(n) = d_{\sigma_{k+1}}(n) \leq i <
d_{\sigma_k}(n) = d_{\sigma_k[g_k]}(n)$$ and it follows from the fact that
$\bar{\sigma}\leq \sigma_k[g_k]$
that $$\sigma_{k+1}(n)(i) = \bar{\sigma}(n)(i) = \bar{\sigma}(m)(i)= \sigma_{k+1}(m)(i) .$$
A similar argument holds if
$d_{\bar{\sigma}}(m) \notin a$ and
therefore the remaining case is that 
\begin{itemize}
\item $d_{\bar{\sigma}}(n)\in a$
\item $d_{\bar{\sigma}}(m)\in a$ \item $ d_{\sigma_k}(n) = d_{\sigma_k}(m)\notin a$.
\end{itemize}
It follows that
\begin{itemize}
\item
$d_{\sigma_{k+1}}(n) =0 = d_{\sigma_{k+1}}(m)$ \item 
$\sigma_{k+1}(n) =0_n \cup \bar{\sigma}(n)$
\item 
$\sigma_{k+1}(m) =0_m \cup \bar{\sigma}(m)$
. 
\end{itemize}
Since Equation~(\ref{June14ajkshd5}) implies that
$d_{\sigma_k[g_k]}(n)  =  d_{\sigma_k[g_k]}(m) $ and $
\bar{\sigma} \leq \sigma_k[g_k]$
it follows that
$d_{\bar{\sigma}}(n) = d_{\bar{\sigma}}(m)$ and
if 
$d_{\bar{\sigma}}(n)\leq  i$ then 
$$\sigma_{k+1}(n)(i) = \bar{\sigma}(n)(i) = \bar{\sigma}(m)(i)
= \sigma_{k+1}(n)(i)$$
and so it can be assumed that
$d_{\bar{\sigma}}(n)> i$. But then 
$$\sigma_{k+1}(n)(i) = 0_n(i) = 0 = 0_m(i) = \sigma_{k+1}(m)(i) $$
as required.

Let $\vec{g}(i) = \sigma_M(i)\restriction N$ for each $i\in d_{\sigma_M}^{-1}\{0\}\setminus d_{\tau}^{-1}\{0\}$.
Define $\sigma$ by
$$\sigma(n) = \begin{cases}
\sigma_M(n)\restriction [N,n) & \text{ if } d_{\sigma_M}(n) = 0 \neq d_{\tau}(n) \\
\sigma_M(n) & \text{ otherwise}\\
\end{cases}$$
and note that $\sigma[\vec{g}] = \sigma_M\leq_a \tau$ (but it may not be the case that
$\sigma[\vec{g}]\leq \sigma$). 
Let $W=\{\sigma_{k+1}[g_k]\}_{k\in M}$ noting that $|W|=M\leq 2^{|A|\max(A)}$. A routine argument shows that $W$ is predense below $\sigma[\vec{g}]$.

To see that $\sigma\leq_A \tau$ first note that $\sigma^{-1}\{N\}\in \mathcal I$ since $\mathcal I$ is an ideal.
Since $\sigma_M\leq_a \tau$ it follows from the construction that
$\sigma_M^{-1}\{j\} = \tau^{-1}\{j\}$ for $j\in a$. Furthermore,
$\sigma_M^{-1}\{0\} = \tau^{-1}\{0\}$
by construction. 
Hence it only needs to be checked that
$\sigma\leq \tau$.
To this end, note that for $n\notin d_{\sigma_M}^{-1}\{0\}\setminus d_{\tau}^{-1}\{0\}$
the inclusions $\sigma(n) = \sigma_M(n)\supseteq \tau(n)$ hold since
$\sigma_M\leq \tau$.
 If $d_{\sigma_M}(n) = 0 =d_{ \tau}(n)$ the desired conclusion is immediate.
 If $d_{\sigma_M}(n) = 0$ and $d_{ \tau}(n)\neq 0$
and $d_{ \tau}(n) < N$ then it must be the case that
$d_{ \tau}(n)\in a$ and so
$\sigma_M(n) = \tau(n)$ because
$\sigma_M\leq_a \tau$. On the other hand, if 
$d_{ \tau}(n)\geq N$ then $\sigma(n) = 
\sigma_M(n)\restriction [N,n)\supseteq \tau(n)$ as required. The final point to note is that Condition~(\ref{BDragJG5}) of Definition~\ref{Sheas31ah7ib} follows from the fact that
$\sigma_M\leq\tau$.
\end{proof}

\begin{lemma}
	\label{29aa0957Drh75}
	If 	$\{\sigma_n\}_{n\in\omega}\subseteq  \Poset(\mathcal I)$ and
 \begin{itemize}
 \item
 $\{k_n\}_{n\in\omega}$ is a strictly increasing sequence of positive integers 
 \item $a_n=\{k_j\}_{j\in n}$  
 \item $a_n = \ran(d_{\sigma_n})\cap k_n$
	\item $\sigma_{n+1}\leq_{a_n} \sigma_n$ for each
	 $n\in \omega $
  \end{itemize}
and $\sigma$ is defined by $\sigma(m) = \bigcup_{n\in\omega}\sigma_n(m)$ then $\sigma\in \Poset(\mathcal I)$ and $\sigma\leq \sigma_n$ for each $n$.
Moreover, the conclusion holds
if it is only assumed that 
$a_n = \ran(d_{\sigma_n})\cap k_n\setminus \{0\}$
but \ $\bigcup_{n\in\omega}d_{\sigma_n}^{-1}\{0\}\in \mathcal I$.
\end{lemma}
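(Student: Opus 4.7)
The plan is to let $d_\sigma(m)$ be the eventual value of the non-increasing sequence $(d_{\sigma_n}(m))_{n\in\omega}$ and show that this $d_\sigma$ witnesses both $\sigma\in\Poset(\mathcal{I})$ and $\sigma\leq\sigma_n$ for every $n$. Since $\sigma_{n+1}\leq\sigma_n$ forces $\sigma_{n+1}(m)\supseteq\sigma_n(m)$, the sequence $d_{\sigma_n}(m)$ is non-increasing in $n$ and hence stabilizes at some $d_\sigma(m)\leq m$; then $\sigma(m)=\bigcup_n\sigma_n(m)$ is a well-defined function $[d_\sigma(m),m)\to 2$, so Conditions~(1)--(2) of Definition~\ref{Sheas31ah7ib} hold.

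For Condition~(3), I would argue that every $j=d_\sigma(m_0)\in\ran(d_\sigma)$ eventually lies in some $a_n$: since $k_n\to\infty$, pick $n_0$ with both $j<k_{n_0}$ and $d_{\sigma_{n_0}}(m_0)=j$, so that $j\in\ran(d_{\sigma_{n_0}})\cap k_{n_0}=a_{n_0}$. Because $a_n\subseteq a_{n+1}$, we have $j\in a_N$ for all $N\geq n_0$, and the assumption $\sigma_{N+1}\leq_{a_N}\sigma_N$ pins down $d_{\sigma_{N+1}}^{-1}\{j\}=d_{\sigma_N}^{-1}\{j\}$. Combined with the pointwise monotone convergence of $d_{\sigma_n}$, this stable set is precisely $d_\sigma^{-1}\{j\}$, which therefore lies in $\mathcal{I}$.

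Turning to $\sigma\leq\sigma_n$: Condition~(4) is immediate, and the crux of Condition~(5) is that $d_{\sigma_n}(m)=d_{\sigma_n}(m')$ implies $d_\sigma(m)=d_\sigma(m')$. By iterated transitivity (as in Lemma~\ref{DRaudzAin1}), for each $N\geq n$ there is a witness $e_{N,n}$ with $d_{\sigma_N}=e_{N,n}\circ d_{\sigma_n}$, so the equality propagates through every $N\geq n$ and therefore to the stabilized values. Then $e_n(d_{\sigma_n}(m))=d_\sigma(m)$ defines a function on $\ran(d_{\sigma_n})$ (extended by $0$ elsewhere), and $d_\sigma(m)\leq d_{\sigma_n}(m)$ yields $e_n(i)\leq i$. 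For Condition~(6), given $d_{\sigma_n}(m)=d_{\sigma_n}(m')=z$ and $e_n(z)\leq i<z$, I would pick $N\geq n$ large enough that $d_{\sigma_N}(m)=d_{\sigma_N}(m')=e_n(z)$; applying Condition~(6) for the transitive witness of $\sigma_N\leq\sigma_n$ gives $\sigma_N(m)(i)=\sigma_N(m')(i)$, which coincides with $\sigma(m)(i)=\sigma(m')(i)$ since $\sigma\supseteq\sigma_N$ coordinatewise.

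The moreover clause requires only a small modification: the value $0$ may never land in any $a_n$, so the above handling of Condition~(3) fails at $j=0$. But $d_\sigma(m)=0$ forces $d_{\sigma_n}(m)=0$ for some $n$, hence $d_\sigma^{-1}\{0\}\subseteq\bigcup_n d_{\sigma_n}^{-1}\{0\}$, and the supplementary hypothesis places the right-hand side in $\mathcal{I}$. I expect the main obstacle to be the verification of Condition~(6); it requires realizing the stabilization of $d_{\sigma_N}(m)$ and $d_{\sigma_N}(m')$ at a \emph{common} stage $N$, but since only two points are in play at a time, this is routine once the monotone convergence is in hand.
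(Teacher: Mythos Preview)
Your proof is correct and follows essentially the same approach as the paper: your ``eventual value'' $d_\sigma(m)=\lim_n d_{\sigma_n}(m)$ coincides with the paper's definition of $d_\sigma$ via the partition $\{d_{\sigma_{n+1}}^{-1}\{k_n\}\}_{n\in\omega}$, and your stabilization argument for Condition~(3) is exactly the content of the paper's observation that these fibers are pairwise disjoint and each lie in $\mathcal{I}$. Your treatment is in fact more thorough than the paper's, which leaves the verification of $\sigma\leq\sigma_n$ (your handling of Conditions~(5) and~(6)) entirely to the reader.
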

\begin{proof} 
	Note that if $n<m$ then
	$$
	d_{\sigma_{n+1}}^{-1}\{k_n\}\cap d_{\sigma_{m+1}}^{-1}\{k_m\} = 
	d_{\sigma_{m+1}}^{-1}\{k_n\}\cap d_{\sigma_{m+1}}^{-1}\{k_m\}=\varnothing
	$$
	and if $j < k_n$ then
	$d_{\sigma_n}(j)\in a_n$.
	Hence
	 $\{d_{\sigma_{n+1}}^{-1}\{k_n\}\}_{n\in\omega}$
 is a partition of $\omega$ and
	therefore, if $d_\sigma$ is defined by
	$d_\sigma(j) = k_n$ if and only $ d_{\sigma_{n+1}}(j) = k_n$
	then $d_\sigma$ is a function defined for all $j$ and it witnesses that  Definition~\ref{Sheas31ah7ib} is satisfied by $\sigma$.
 The same argument proves the moreover clause. 
 \end{proof}

\begin{lemma}\label{Tu8777hDrh75}
	If 	$\mathcal I$ is a maximal ideal and $\tau\in \Poset(\mathcal I)$ and
	$D_n\subseteq \Poset(\mathcal I)$ are dense for $n\in\omega$
	 then there are 
 $\sigma\leq \tau$
 and  $\vec{g}_j :d_\sigma^{-1}\{j\}\to 2^j$ and  $W_j$ such that  for each $j$:
 \begin{enumerate}
\item\label{ItC1} $W_{2j}\cup W_{2j+1}\subseteq D_j$ 
\item \label{ItC2} $|W_{2j}|< 2^{(j-1)2j}$  and $|W_{2j+1}|< 2^{j(2j-1)}$ 
\item \label{ItC3} $W_{2j}\cup W_{2j+1}$ is predense below both
$\sigma\left[\bigcup_{i\leq j}\vec{g}_{2i}\right]$
and
$\sigma\left[\bigcup_{i< j}\vec{g}_{2i+1}\right]$
 \end{enumerate}
 and, furthermore, 
 either  $\sigma\left[\bigcup_{i\in \omega}\vec{g}_{2i}\right]\leq \tau$
 or $\sigma\left[\bigcup_{i\in \omega}\vec{g}_{2i+1}\right]\leq \tau$.
 Moreover, if $\mathfrak M$ is an elementary model of a sufficiently large fragment of set theory containing $\tau$ and $\mathcal I$ and such that $\{D_n\}_{n\in\omega}\subseteq \mathfrak M$ then it is possible to arrange that $W_j\in\mathfrak M$ for each $j$.
\end{lemma}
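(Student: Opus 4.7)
The plan is a fusion argument that repeatedly invokes Lemma~\ref{2a9Tu8777hDrh75} to kill the dense sets $D_n$, concluding via Lemma~\ref{29aa0957Drh75}. Build inductively a $\leq_{a_n}$-decreasing sequence $\{\sigma_n\}_{n\in\omega}$ in $\Poset(\mathcal{I})$ with $\sigma_0=\tau$, a strictly increasing sequence $\{k_n\}_{n\in\omega}$ of positive integers, and auxiliary data $\vec{g}_n,W_n$. At stage $n$, apply Lemma~\ref{2a9Tu8777hDrh75} to $\sigma_n$ with $N:=k_n$ and $D:=D_{\lfloor n/2\rfloor}$ to produce $\sigma_{n+1}\leq_{A_n}\sigma_n$ (with $A_n=\ran(d_{\sigma_n})\cap k_n$), a filling $\vec{g}_n:d_{\sigma_{n+1}}^{-1}\{k_n\}\to 2^{k_n}$ with $\sigma_{n+1}[\vec{g}_n]\leq \sigma_n$, and a set $W_n\subseteq D_{\lfloor n/2\rfloor}$ predense below $\sigma_{n+1}[\vec{g}_n]$ of cardinality at most $2^{|A_n|\max(A_n)}$. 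Choose $\{k_n\}$ to grow slowly enough (with some early $W_n$ trivial) that these bounds match the prescribed $2^{(j-1)\cdot 2j}$ and $2^{j(2j-1)}$. Lemma~\ref{29aa0957Drh75} then yields $\sigma(m):=\bigcup_n\sigma_n(m)\in \Poset(\mathcal{I})$ with $\sigma\leq \sigma_n$ for every $n$.

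The key observation driving the ``either/or'' conclusion is that each $\vec{g}_n$ produced by Lemma~\ref{2a9Tu8777hDrh75} is constant on $d_{\sigma_n}$-fibers, hence on the finer $d_\tau$-fibers. Indeed, Condition~(\ref{BDragJG5}) of Definition~\ref{Sheas31ah7ib} applied to the relation $\sigma_{n+1}[\vec{g}_n]\leq \sigma_n$, specifically at indices $\ell,\ell'$ with $d_{\sigma_{n+1}}(\ell)=d_{\sigma_{n+1}}(\ell')=k_n$ and $d_{\sigma_n}(\ell)=d_{\sigma_n}(\ell')$, forces $\vec{g}_n(\ell)=\vec{g}_n(\ell')$. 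Given this, for $\vec{G}$ equal to $\bigcup_i\vec{g}_{2i}$ or $\bigcup_i\vec{g}_{2i+1}$, a direct case analysis on whether $d_\sigma(\ell)$ lies in the filled set verifies Conditions~(\ref{BDragJG5aaa}) and (\ref{BDragJG5}) of $\sigma[\vec{G}]\leq \tau$. The only remaining point is to verify that $d_{\sigma[\vec{G}]}^{-1}\{0\}=d_\sigma^{-1}\{0\}\cup\bigcup_i d_\sigma^{-1}\{k_{2i}\}$ (respectively $d_\sigma^{-1}\{0\}\cup \bigcup_i d_\sigma^{-1}\{k_{2i+1}\}$) lies in $\mathcal{I}$.

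This is exactly where the hypothesis that $\mathcal{I}$ is maximal is used. Since $\ran(d_\sigma)\subseteq\{k_n:n\in\omega\}\cup\{0\}$ and $d_\sigma^{-1}\{0\}\in \mathcal{I}$, the disjoint sets $\bigcup_i d_\sigma^{-1}\{k_{2i}\}$ and $\bigcup_i d_\sigma^{-1}\{k_{2i+1}\}$ together exhaust $\omega$ modulo $\mathcal{I}$; by maximality exactly one of them belongs to $\mathcal{I}$, yielding the corresponding disjunct $\sigma[\bigcup_i\vec{g}_{2i}]\leq \tau$ or $\sigma[\bigcup_i\vec{g}_{2i+1}]\leq \tau$. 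Predensity of $W_n$ below $\sigma_{n+1}[\vec{g}_n]$ is preserved under strengthening, and the fusion is arranged so that each of $\sigma[\bigcup_{i\leq j}\vec{g}_{2i}]$ and $\sigma[\bigcup_{i<j}\vec{g}_{2i+1}]$ falls below the relevant $\sigma_{n+1}[\vec{g}_n]$, making $W_{2j}\cup W_{2j+1}$ predense below both chains. For the elementary submodel clause, one carries out the construction inside $\mathfrak{M}$: every choice at each step is definable from the parameters $\tau,\mathcal{I},\{D_n\}_{n\in\omega}\in \mathfrak{M}$, so by elementarity $W_n,\vec{g}_n,\sigma_{n+1}$ may be taken in $\mathfrak{M}$. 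The principal obstacle is the numerical bookkeeping: fitting the exact bounds $|W_{2j}|<2^{(j-1)\cdot 2j}$ and $|W_{2j+1}|<2^{j(2j-1)}$ demands a delicate choice of $\{k_n\}$ and a careful staging that keeps $|A_n|\max(A_n)$ below the prescribed thresholds throughout.
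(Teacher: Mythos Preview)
Your fusion outline and the use of maximality of $\mathcal{I}$ are both correct in spirit, and they match the paper's architecture. There is, however, a genuine gap in the way you invoke Lemma~\ref{2a9Tu8777hDrh75}, and it is exactly at the point you flag as ``the principal obstacle.''

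You apply Lemma~\ref{2a9Tu8777hDrh75} at stage $n$ to the bare condition $\sigma_n$ with $N=k_n$. The set $A_n=\ran(d_{\sigma_n})\cap k_n$ then contains \emph{all} of the previously created levels $k_0,\ldots,k_{n-1}$ (these are preserved by every $\leq_{A_m}$ step), so $\lvert A_n\rvert\geq n$ and $\max(A_n)\geq k_{n-1}\geq n$. The bound from Lemma~\ref{2a9Tu8777hDrh75} is therefore at best of order $2^{n\cdot k_{n-1}}\geq 2^{n^2}$. At stage $2j$ this is $\geq 2^{4j^2}$, which can never be brought below the prescribed $2^{(j-1)\cdot 2j}$ no matter how slowly $\{k_n\}$ grows. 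So the cardinality clauses~(\ref{ItC2}) are unreachable by your scheme, and those clauses are precisely what is used in Corollary~\ref{CCOOR777hDrh75} to get the full Sacks Property rather than merely the Weak one.

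The paper avoids this by an alternation you are missing: at stage $2n$ it applies Lemma~\ref{2a9Tu8777hDrh75} not to $\sigma_{2n}$ but to $\sigma_{2n}\bigl[\bigcup_{i<n}\vec{g}_{2i+1}\bigr]$, the condition with the \emph{opposite}-parity fillings already inserted. This collapses the odd levels below $2n$ to $0$, so that $A$ consists only of the even levels below $2n$ and hence $\lvert A\rvert\approx n$ rather than $2n$. That halving of the exponent is exactly what produces the bounds $2^{(j-1)2j}$ and $2^{j(2j-1)}$. The same alternation is what makes the predensity claim automatic: because $W_{2j+1}$ is obtained as predense below $\sigma_{2j+1}\bigl[\bigcup_{i\leq j}\vec{g}_{2i}\bigr]$ directly, one never has to argue that $\sigma\bigl[\bigcup_{i\leq j}\vec{g}_{2i}\bigr]\leq \sigma_{2j+1}[\vec{g}_{2j}]$ (a relation that in fact need not hold, since Condition~(\ref{BDragJG5}) would require $\vec{g}_{2i}$ to be globally constant on its domain, not merely constant on $d_{\sigma_{2i}}$-fibers).
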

\begin{proof}
 It will first be shown how to construct $\{\sigma_n\}_{n\in\omega}$, $\vec{g}_j :d_\sigma^{-1}\{j\}\to 2^j$ and  $W_j$  for each $j$ such that:
 \begin{itemize}
\item $\sigma_{j+1}\leq_{j} \sigma_{j}$
\item $\vec{g}_j :d_{\sigma_{j+1}}^{-1}\{j\}\to 2^j$
\item conditions~(\ref{ItC1}) and (\ref{ItC2}) hold
\item 
$\sigma_{2j}\left[\bigcup_{i\in  j}
 \vec{g}_{2i+1}\right]\leq \tau$
\item 
$\sigma_{2j+1}\left[\bigcup_{i\leq  j}
\vec{g}_{2i}\right]\leq \tau$
\item $W_{2j}$ is predense below
$\sigma_{2j}\left[\bigcup_{i\in  j}
\vec{g}_{2i+1}\right]$ 
\item $W_{2j+1}$ is predense below $\sigma_{2j+1}\left[\bigcup_{i\leq  j}
\vec{g}_{2i}\right]$.
 \end{itemize}
	If this can be done then
	let $\sigma(n) = \bigcup_{j\in\omega}\sigma_j(n)$ and  use
	 Lemma~\ref{29aa0957Drh75} to conclude that $\sigma  \in \Poset(\mathcal I)$ and $\sigma\leq \tau$.
	Then the sets $d_{\sigma_{j+1}}^{-1}\{j\}$ are pairwise disjoint and so the maximality of $\mathcal I$ implies that
 at least one of the following holds: 
 \begin{enumerate}
\item \label{Oneqw} $\bigcup_{j\in\omega}d_{\sigma_{2j+1}}^{-1}\{2j\}\in \mathcal I$
\item \label{Oneqw2} $
 \bigcup_{j\in\omega}d_{\sigma_{2j+2}}^{-1}\{2j+1\}\in \mathcal I .$
 \end{enumerate}
If (\ref{Oneqw}) holds then
 $\sigma\left[\bigcup_{i\in \omega}\vec{g}_{2i}\right]
 \in \Poset(\mathcal I)$
 and $\sigma\left[\bigcup_{i\in \omega}\vec{g}_{2i}\right]\leq \tau$ while if
(\ref{Oneqw2}) holds then
 $\sigma\left[\bigcup_{i\in \omega}\vec{g}_{2i+1}\right]
 \in \Poset(\mathcal I)$
and $\sigma\left[\bigcup_{i\in \omega}\vec{g}_{2i+1}\right]\leq \tau$.
Conditions~(\ref{ItC1}) and (\ref{ItC2}) hold by construction. Conditions~(\ref{ItC3})  holds because
$W_{2j}\cup W_{2j+1}$ is predense
below both 
$\sigma_{2j}\left[\bigcup_{i\in  j}
\vec{g}_{2i+1}\right]$ 
and $\sigma_{2j+1}\left[\bigcup_{i\leq  j}
\vec{g}_{2i}\right]$ and
$$\sigma\left[\bigcup_{i\in \omega}\vec{g}_{2i}\right]\leq 
\sigma_{2j+1}\left[\bigcup_{i\leq  j}
\vec{g}_{2i}\right]
$$
and
$$\sigma\left[\bigcup_{i\in \omega}\vec{g}_{2i+1}\right]\leq \sigma_{2j}\left[\bigcup_{i\in  j}
\vec{g}_{2i+1}\right] .$$

 To carry out the induction suppose
 that
 $\sigma_{2n}$ has been constructed --- to prove the {\em moreover} clause assume that the first $2n$ steps of the inductive construction have been carried out in the model $\mathfrak M$. Then 
 use Lemma~\ref{2a9Tu8777hDrh75}
  with 
 $\tau = \sigma_{2n}\left[\bigcup_{i\in n}\vec{g}_{2i+1}\right]$ and
 $N=2n$ and noting that 
 $\ran(d_\tau) = \{2i+1\}_{i \in n}=a$
  to find
  $\sigma^*_{2n+1}\leq_a \sigma_{2n}\left[\bigcup_{i\in n}\vec{g}_{2i+1}\right]$ and $W_{2n+1}\subseteq D_n$ 
  and
  $$\vec{g}_{2n}:d_{\sigma^*_{2n+1}}^{-1}\{2n\}\to 2^{2n}$$ satisfying the conclusion of Lemma~\ref{2a9Tu8777hDrh75}
  and note that, since $D_n\in \mathfrak M$, elementarity guarantees that $W_{2n+1}$ and $\sigma^*_{2n-1}[\vec{g}_{2n}]$ can be chosen in $\mathfrak M$.
   In other words, the following three conditions hold:
  \begin{equation}\label{Cndh6hAlDz1}
  	|W_{2n+1}|\leq 2^{n(2n-1)}
  	\end{equation}
  \begin{equation}\label{1Cndh6hAlDz1}
  	W_{2n+1}\text{ is predense below }
  	\sigma^*_{2n-1}[\vec{g}_{2n}] .
  \end{equation}
 \begin{equation}\label{2Cndh6hAlDz1}
 	W_{2n+1}\in \mathfrak M
 	 \end{equation}
 	 with condition (\ref{2Cndh6hAlDz1}) being relevant only in the case of the {\em moreover } clause.
  
  Then define $\sigma_{2n+1}$ by
  $$\sigma_{2n+1}(\ell) = \begin{cases}
\sigma^*_{2n+1}(\ell) &\text{ if } d_{\sigma^*_{2n+1}}(\ell)\geq 2n\\
\sigma_{2n}(\ell) &\text{ if } d_{\sigma^*_{2n+1}}(\ell)< 2n .
  \end{cases}$$
 It follows that $\sigma_{2n+1}\leq_{2n} \sigma_{2n}$ as required. A similar argument now yields $\sigma_{2n+2}$, $W_{2n+2}$ and $\vec{g}_{2n+2}$.
\end{proof}

\begin{defin}
	A partial order $\Poset$ has the {\em Sacks Property} (see Definition~2.9A on page 291 of \cite{MR1623206} or Definition~6.3.37 on page 303 of \cite{MR1350295}) if whenever
	$p \forces{\Poset}{\dot{f}:\omega \to \omega}$ and $g:\omega\to\omega$ and $\lim_{n\to\infty}g(n) = \infty$ there is $q\leq p$
	and $F\in \prod_n[\omega]^{g(n)}$ such that
	$q\forces{\Poset}{(\forall n\in \omega) \ \dot{f}(n) \in {F}(n)}$.
	A partial order $\Poset$ has the {\em Weak Sacks Property} (see Definition~2.12A on page 298 of \cite{MR1623206} where it is called the {\em Strong PP Property}) if whenever
	$p \forces{\Poset}{\dot{f}:\omega \to \omega}$ and $g:\omega\to\omega$ and $\lim_{n\to\infty}g(n) = \infty$ there is $q\leq p$
	and $F\in \prod_n[\omega]^{g(n)}$ and an infinite $A\subseteq \omega $ such that
	$q\forces{\Poset}{(\forall n\in \check{A}) \ \dot{f}(n) \in F(n)}$.
\end{defin}
The following theorems were proved by Shelah as part of a large collection of preservation lemmas in 
\S~2.12A of \cite{MR1623206}.
\begin{theor}\label{ShePrewws}
	If $\Poset_\beta$ is proper and has the Weak Sacks Property for each $\beta\in \alpha$ and $\Poset_\alpha$
	is the countable support iteration of the $\Poset_\beta$ then $\Poset_\alpha$ also has the Weak Sacks Property. 
	\end{theor}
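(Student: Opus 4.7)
The plan is to argue by transfinite induction on $\alpha$, proving a strengthened $(\mathfrak{M},\Poset_\alpha)$-preservation statement of the form that is stable under the two operations of countable support iteration. Specifically, I would show: for every countable elementary submodel $\mathfrak{M}$ of $H(\kappa)$ (for sufficiently large $\kappa$) containing $\Poset_\alpha, \dot{f}, g$, and every $p\in \Poset_\alpha\cap\mathfrak{M}$ forcing $\dot{f}:\omega\to\omega$ with $\lim g = \infty$, there exist an $(\mathfrak{M},\Poset_\alpha)$-generic $q\leq p$, an $F\in \mathfrak{M}\cap\prod_n[\omega]^{g(n)}$, and an infinite $A\in\mathfrak{M}$ with $q\forces{\Poset_\alpha}{(\forall n\in\check{A})\,\dot{f}(n)\in F(n)}$. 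Holding $F$ and $A$ inside $\mathfrak{M}$ is what allows the witnesses to be recycled between stages, and $(\mathfrak{M},\Poset_\alpha)$-genericity is precisely what keeps properness alive through countable support, which is available because every $\Poset_\beta$ has been assumed proper.

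For the successor step $\alpha=\beta+1$, one factors $\Poset_{\beta+1}\cong\Poset_\beta * \dot{\Poset}(\beta)$. Working in $V^{\Poset_\beta}$, where the iterand $\dot{\Poset}(\beta)$ has the Weak Sacks Property by hypothesis, the single-step WSP produces a name for a pair $(\dot F,\dot A)$ witnessing the conclusion for the remainder of $\dot f$. Applying the inductive hypothesis to $\Poset_\beta$ with $\mathfrak{M}$ and a carefully chosen $\Poset_\beta$-name coding both $\dot{f}$ and the tail witness lets one decide, in $\mathfrak{M}$, absolute values for $F$ and $A$ that dominate $\dot F$ and $\dot A$ on an infinite subset; the resulting $(\mathfrak{M},\Poset_\beta)$-generic lifts to the desired $(\mathfrak{M},\Poset_{\beta+1})$-generic.

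For the limit step, fix a cofinal sequence $\alpha_n\nearrow\alpha$ with $\{\alpha_n\}_{n\in\omega}\in\mathfrak{M}$ and perform a fusion in the spirit of standard proper iteration arguments: build $p=p_0\geq p_1\geq\cdots$ with each $p_n\in\mathfrak{M}$ an $(\mathfrak{M},\Poset_{\alpha_n})$-generic condition obtained from the induction hypothesis, accompanied by nested partial witnesses $(F_n,A_n)\in\mathfrak{M}$ controlling $\dot{f}\restriction k_n$ for a strictly increasing sequence $k_n\to\infty$. Countable support furnishes a limit condition $q\in\Poset_\alpha$ extending every $p_n$, and a routine elementarity argument shows $q$ is $(\mathfrak{M},\Poset_\alpha)$-generic. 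Taking $F$ to be the coherent union of the $F_n$ and $A\subseteq\omega$ to be a diagonal of the $A_n$ completes the induction.

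The main obstacle is the bookkeeping at the limit stage: one must simultaneously coordinate $(\mathfrak{M},\Poset_{\alpha_n})$-genericity at each step, membership of the evolving witnesses in $\mathfrak{M}$, and the successive approximation of the global witness $(F,A)$. It is precisely here that the \emph{weak} form of the Sacks Property is essential and the strong form would fail; because one only needs $\dot{f}(n)\in F(n)$ for infinitely many $n$, one can afford at each fusion stage to sacrifice any finite set of coordinates on which a later iterand might inject new freshness, and then take the surviving infinite set $A$ as the diagonal intersection of the $A_n$. This is the feature that distinguishes this theorem from the much subtler preservation of the (full) Sacks Property under countable support iteration, and it is what makes the present theorem --- as well as its application in \S\ref{VarsrSect} --- genuinely more robust than one might first expect.
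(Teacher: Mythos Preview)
The paper does not prove this theorem at all: it is stated without proof and attributed to Shelah's preservation machinery in \S2.12A of Chapter~VI of \cite{MR1623206}. So there is no ``paper's own proof'' to compare against; your sketch is being measured only against the known literature.

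As a high-level outline your plan is the standard one for iteration preservation theorems---strengthen the statement to something that survives both successor and limit stages, then induct---and that is indeed how Shelah's argument proceeds. However, several points in your sketch are underspecified or misleading. First, the successor step is where the real content lies, and your description glosses over the key difficulty: the iterand's Weak Sacks Property produces \emph{names} $\dot F,\dot A$ for the witnesses in $V^{\Poset_\beta}$, and it is not at all obvious how applying the inductive WSP for $\Poset_\beta$ to ``a carefully chosen name coding both $\dot f$ and the tail witness'' converts these into ground-model objects in $\mathfrak M$. You cannot simply apply WSP to the characteristic function of $\dot A$, since WSP only traps infinitely many values in small sets and gives no control over which infinite set survives. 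Shelah's actual argument routes through a more elaborate strengthening (a game-theoretic or covering formulation) precisely to make this two-step composition go through; your strengthening as stated does not obviously suffice. Second, at the limit your ``diagonal of the $A_n$'' must be shown to be infinite, which requires arranging $A_{n+1}\subseteq A_n$ modulo finite with controlled rate of thinning---this is doable but is exactly the bookkeeping you defer. Third, your closing remark that ``the strong form would fail'' at the limit is misleading: the full Sacks Property \emph{is} preserved under countable support iteration of proper forcings (the paper itself cites this in the proof of Corollary~\ref{CorNoNowjlhMyV}), so whatever distinction you have in mind between the two preservation proofs, it is not that one succeeds and the other fails.

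In short: the shape is right, but the sketch does not yet contain the idea that makes the successor step work. Consult \S2.12 of \cite{MR1623206} for the precise strengthening Shelah iterates.
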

	\begin{theor}\label{ShePrewwsA}
		If  $\Poset$ has the Weak Sacks Property and
  \begin{align*}
   {\sf 1}\forces{\Poset}{\mathbb Q\text{ has the Weak Sacks Property}}
  \end{align*}
   then 
		$\Poset\ast\mathbb Q$ has the Weak Sacks Property. 
	\end{theor}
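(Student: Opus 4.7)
The plan is to use a standard two-application argument. Fix $(p, \dot q) \in \Poset \ast \mathbb Q$ forcing $\dot f: \omega \to \omega$, and $g: \omega \to \omega$ with $g(n) \to \infty$. I will first choose non-decreasing $g_1, g_2: \omega \to \omega$ going to infinity with $g_1(n)\, g_2(n) \leq g(n)$ for every $n$.

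The first step is to work in $V^\Poset$ below $\dot q$ and apply the forced Weak Sacks Property of $\mathbb Q$ to $\dot f$ with rate $g_1$. This will yield $\Poset$-names $\dot q_1 \leq \dot q$, $\dot F_1$ with $|\dot F_1(n)| \leq g_1(n)$, and infinite $\dot A_1 \subseteq \omega$ satisfying $\dot q_1 \Vdash_{\mathbb Q} (\forall n \in \dot A_1)\, \dot f(n) \in \dot F_1(n)$. Let $\dot e_1$ enumerate $\dot A_1$ in increasing order and let $\dot \psi: \omega \to \omega$ be the $\Poset$-name coding $(\dot e_1(n), \dot F_1(\dot e_1(n)))$ under a fixed G\"odel pairing.

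The second step is to apply the Weak Sacks Property of $\Poset$ to $\dot \psi$ with rate $g_2$, producing $p_1 \leq p$, a $V$-slalom $\hat \Psi$ with $|\hat \Psi(n)| \leq g_2(n)$, and an infinite $V$-set $D \subseteq \omega$ with $p_1 \Vdash (\forall n \in D)\, \dot \psi(n) \in \hat \Psi(n)$. After pruning each $\hat \Psi(n)$ to pair-codes $(v, w)$ with $v \geq n$ and $|w| \leq g_1(v)$, I will inductively select a sparse infinite $D' = \{d_0 < d_1 < \cdots\} \subseteq D$ with $V_{d_k} := \{v : (\exists w)\,(v, w) \in \hat \Psi(d_k)\} \subseteq [d_k, d_{k+1})$ (possible since each $V_{d_k}$ is finite), which makes the $V_{d_k}$ pairwise disjoint. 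Setting $A := \bigcup_k V_{d_k}$ and $F(v) := \bigcup \{w : (v, w) \in \hat \Psi(n(v))\}$, where $n(v) \in D'$ is the unique $d_k$ with $v \in V_{d_k}$, one obtains $|F(v)| \leq g_2(n(v))\,g_1(v) \leq g_1(v)\,g_2(v) \leq g(v)$, so $F$ is a $V$-slalom of rate $g$ and $(p_1, \dot q_1) \Vdash (\forall n \in D')\, \dot f(\dot e_1(n)) \in F(\dot e_1(n))$.

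The main obstacle will be that $\dot f$ has been $F$-captured only on the $V^\Poset$-infinite set $\dot e_1[D'] \subseteq A \cap \dot A_1$, whereas the Weak Sacks Property requires a $V$-infinite capturing set. To bridge this gap, I plan to refine $(p_1, \dot q_1)$ to an $(\mathfrak M, \Poset \ast \mathbb Q)$-generic condition for a countable elementary submodel $\mathfrak M \prec H(\theta)$ containing all of the relevant parameters, and then invoke Shelah's preservation machinery from \S2.12A of Chapter~VI of~\cite{MR1623206} to convert the generic capture on $\dot e_1[D']$ into a capture on an infinite $A^\ast \subseteq A$ in $V$. This final extraction, which uses the countability of $\mathfrak M$ and the finiteness of each $V_{d_k}$ in an essential way, is the main technical hurdle beyond the two-application scheme.
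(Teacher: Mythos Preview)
The paper does not prove this theorem at all: it is stated without proof and attributed to Shelah, with a pointer to \S2.12A of Chapter~VI of \cite{MR1623206}. So there is no ``paper's own proof'' to compare against; the only question is whether your sketch stands on its own.

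Your two-application outline is the natural first attempt, and you correctly locate the genuine difficulty: after the second application of the Weak Sacks Property you have captured $\dot f$ only on the name $\dot e_1[D']$, which is merely forced to be infinite, and there is no obvious way to extract an infinite ground-model $A^\ast$ on which the capture holds. Your proposed resolution, however, is not a proof. You plan to pass to an $(\mathfrak M,\Poset\ast\mathbb Q)$-generic extension and then ``invoke Shelah's preservation machinery from \S2.12A of Chapter~VI of \cite{MR1623206}.'' But that section is precisely the source being cited for the theorem you are trying to prove, so this is circular: you are deferring the hard step back to Shelah, which is exactly what the paper already does in one line. Moreover, the refinement to an $\mathfrak M$-generic condition presupposes that $\Poset\ast\mathbb Q$ is proper, an assumption not present in the statement as written (properness is assumed explicitly in the companion Theorem~\ref{ShePrewws}, and holds in every application in the paper, but you would need to either add it as a hypothesis or argue it separately).

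If you want a self-contained argument, the two-step scheme must be replaced by something closer to Shelah's actual treatment: one works with an equivalent game- or tree-based reformulation of the Strong~PP Property that is designed to iterate, rather than applying the bare definition twice. The naive double application does not close by itself.
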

\begin{corol}	\label{CCOOR777hDrh75}
	If $\mathcal I$ is a maximal ideal then 	$\Poset(\mathcal I)$ has the Sacks Property.
	\end{corol}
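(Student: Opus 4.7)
The plan is to apply Lemma~\ref{Tu8777hDrh75} to a family of dense sets designed to decide $\dot{f}$ on progressively longer initial segments, and then read the required slalom off the predense sets $W_{2j}\cup W_{2j+1}$ that the lemma produces. The maximality of $\mathcal I$ is exactly the hypothesis needed to invoke Lemma~\ref{Tu8777hDrh75}.

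Fix $\tau\in\Poset(\mathcal I)$ with $\tau\forces{\Poset(\mathcal I)}{\dot{f}:\omega\to\omega}$ and $g:\omega\to\omega$ satisfying $\lim_{n\to\infty} g(n)=\infty$. Clause~(\ref{ItC2}) of Lemma~\ref{Tu8777hDrh75} bounds $|W_{2j}|+|W_{2j+1}|$ by an explicit function $B(j)$ of $j$. Using that $g$ diverges, choose a strictly increasing sequence $0=m_0<m_1<m_2<\cdots$ so that $g(k)\geq B(j)$ for every $k\in[m_j,m_{j+1})$. For each $j\in\omega$ set
\[ D_j = \{\rho\in\Poset(\mathcal I): \rho\text{ decides }\dot{f}\restriction m_{j+1}\}; \]
each $D_j$ is dense in $\Poset(\mathcal I)$ by a routine extension argument.

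Apply Lemma~\ref{Tu8777hDrh75} to $\tau$ and $\{D_j\}_{j\in\omega}$ to obtain $\sigma$, $\{\vec{g}_j\}_{j\in\omega}$, and $\{W_j\}_{j\in\omega}$ satisfying (\ref{ItC1})--(\ref{ItC3}), together with a condition $q\leq\tau$ of the form $\sigma\bigl[\bigcup_{i\in\omega}\vec{g}_{2i}\bigr]$ or $\sigma\bigl[\bigcup_{i\in\omega}\vec{g}_{2i+1}\bigr]$. The two cases are symmetric, so assume the former. For each $k\in[m_j,m_{j+1})$ define
\[ F(k) = \{v\in\omega : \exists w\in W_{2j}\cup W_{2j+1},\ w\forces{\Poset(\mathcal I)}{\dot{f}(k)=v}\}, \]
padded arbitrarily if necessary so that $|F(k)|=g(k)$. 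Each $w\in W_{2j}\cup W_{2j+1}\subseteq D_j$ decides $\dot{f}(k)$, so before padding $|F(k)|\leq |W_{2j}|+|W_{2j+1}|\leq B(j)\leq g(k)$ by the choice of $m_j$.

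To conclude, observe that (\ref{ItC3}) supplies that $W_{2j}\cup W_{2j+1}$ is predense below $\sigma\bigl[\bigcup_{i\leq j}\vec{g}_{2i}\bigr]$; since $q$ extends this condition (pointwise, by adding the remaining blocks $\vec{g}_{2i}$ for $i>j$), $W_{2j}\cup W_{2j+1}$ is predense below $q$ as well. Thus for every $\Poset(\mathcal I)$-generic $G\ni q$ there is some $w\in G\cap(W_{2j}\cup W_{2j+1})$, and $w$ forces $\dot{f}(k)\in F(k)$ for each $k\in[m_j,m_{j+1})$. Therefore $q\forces{\Poset(\mathcal I)}{(\forall k)\;\dot{f}(k)\in F(k)}$, establishing the Sacks Property. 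The main obstacle is the combinatorial bookkeeping: aligning the sizes $|W_{2j}\cup W_{2j+1}|$ with the growth of $g$, which is handled upfront by selecting the $m_j$, and verifying that the predensity of $W_{2j}\cup W_{2j+1}$ passes from the intermediate condition $\sigma[\bigcup_{i\leq j}\vec{g}_{2i}]$ down to $q$. All the real combinatorial work is absorbed into Lemma~\ref{Tu8777hDrh75}.
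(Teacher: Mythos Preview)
Your proposal is correct and follows essentially the same route as the paper: apply Lemma~\ref{Tu8777hDrh75} to dense sets that decide $\dot f$ on successive blocks chosen so that $g$ dominates the size bounds in (\ref{ItC2}), and read the slalom off the resulting predense sets. The only cosmetic differences are that the paper uses just $W_{2n}$ rather than $W_{2j}\cup W_{2j+1}$, and that the paper asserts directly that this set is predense below $q=\sigma\bigl[\bigcup_{i\in\omega}\vec g_{2i}\bigr]$ rather than passing through the intermediate condition $\sigma\bigl[\bigcup_{i\le j}\vec g_{2i}\bigr]$; your use of the union is harmless since both halves lie in $D_j$ by (\ref{ItC1}).

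One small caution: your phrase ``$q$ extends this condition (pointwise, by adding the remaining blocks)'' glosses over the fact that $q \le \sigma\bigl[\bigcup_{i\le j}\vec g_{2i}\bigr]$ in the sense of Definition~\ref{Sheas31ah7ib} is not entirely automatic, because Condition~(\ref{BDragJG5}) must be checked and the functions $\vec g_{2i}$ need not be constant on fibres. The predensity of $W_{2j}\cup W_{2j+1}$ below $q$ is nonetheless true; it is established inside the proof of Lemma~\ref{Tu8777hDrh75} via the inequality $q\le \sigma_{2j+1}\bigl[\bigcup_{i\le j}\vec g_{2i}\bigr]$, and the paper's own proof of the corollary simply quotes it. So the step is fine, but the justification you gave for it is slightly informal.
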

	\begin{proof}
		Suppose that $\tau\forces{\Poset(\mathcal I)}{\dot{f}:\omega\to\omega}$ and $\lim_{n\to\infty}g(n) =\infty$. 
  Let  $L_n$ be so large that $g(\ell)> 2^{2n^2}$ for all $\ell\geq L_n$. 
Then let $D_n$ be the dense set of conditions $\theta\in \Poset({\mathcal I})$
such that there is  $h_\theta:[L_n,L_{n+1})\to 2$  such that 
$$\theta\forces{\Poset({\mathcal I})}{\dot{f}\restriction[L_n,L_{n+1}) = h_\theta} . $$
	Let 
$W_j$, $\vec{g}_j$ and $\sigma^*$ be as guaranteed by Lemma~\ref{Tu8777hDrh75} for $\{D_j\}_{j\in\omega}$  and $\tau$ and suppose, without loss of generality, that
$\sigma\left[\bigcup_{i\in \omega}\vec{g}_{2i}\right]\leq \tau$. Then $W_{2n}\subseteq D_n$ and is predense
below $\sigma\left[\bigcup_{i\in \omega}\vec{g}_{2i}\right]$.

Define $F:\omega\to [\omega]^{<\aleph_0}$ by
$$F(j) = \SetOf{h_\theta(j)}{L_n\leq j < L_{n+1} \ \And \ \theta\in W_{2n}   }$$
and note that if $L_n\leq j < L_{n+1}$ then 
$$|F(j)|\leq |W_{2n}|\leq 2^{2n^2} < g(j) .$$
Since $W_{2n}$ is predense
below $\sigma\left[\bigcup_{i\in \omega}\vec{g}_{2i}\right]$
it follows that
 $$\sigma\left[\bigcup_{i\in \omega}\vec{g}_{2i}\right]\forces{\Poset({\mathcal I})}{\dot{f}(j)\in F(j)}$$
 if $L_n\leq j < L_{n+1}$ and, hence,
 $\sigma\forces{\Poset({\mathcal I})}{(\forall j ) \ \dot{f}(j)\in F(j)}$.
\end{proof}

\begin{corol}	\label{CCOOR777hDrh75A}
	If $\mathcal I$ is a maximal ideal then $\Poset(\mathcal I)$ is proper.
\end{corol}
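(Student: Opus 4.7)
The plan is to reduce properness to a direct application of the moreover clause of Lemma~\ref{Tu8777hDrh75}. Recall that to verify $\Poset(\mathcal{I})$ is proper it suffices to show that for a sufficiently large regular cardinal $\theta$, every countable elementary submodel $\mathfrak{M}\prec H(\theta)$ containing $\Poset(\mathcal{I})$ and $\mathcal{I}$, and every $\tau \in \Poset(\mathcal{I}) \cap \mathfrak{M}$, one can find $\sigma^* \leq \tau$ such that for every dense open $D \in \mathfrak{M}$ the set $D \cap \mathfrak{M}$ is predense below $\sigma^*$.

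Given such $\mathfrak{M}$ and $\tau$, I would enumerate the countably many dense open subsets of $\Poset(\mathcal{I})$ that belong to $\mathfrak{M}$ as $\{D_n\}_{n \in \omega}$, so that $\{D_n\}_{n \in \omega} \subseteq \mathfrak{M}$ even though the enumeration itself need not be an element of $\mathfrak{M}$. Apply Lemma~\ref{Tu8777hDrh75} to $\tau$ and this sequence, invoking its moreover clause to arrange that each finite set $W_j$ delivered by the lemma lies in $\mathfrak{M}$. Without loss of generality, the lemma then supplies $\sigma^* := \sigma\bigl[\bigcup_{i \in \omega} \vec{g}_{2i}\bigr] \leq \tau$; the symmetric case is handled identically.

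To verify $(\mathfrak{M}, \Poset(\mathcal{I}))$-genericity of $\sigma^*$, fix $n \in \omega$. By condition~(\ref{ItC1}) of Lemma~\ref{Tu8777hDrh75}, $W_{2n} \cup W_{2n+1} \subseteq D_n$. Since any finite element of $\mathfrak{M}$ is necessarily a subset of $\mathfrak{M}$ (by induction on cardinality using elementarity), $W_{2n} \cup W_{2n+1} \subseteq D_n \cap \mathfrak{M}$. By condition~(\ref{ItC3}) this finite union is predense below $\sigma\bigl[\bigcup_{i \leq n} \vec{g}_{2i}\bigr]$, so provided $\sigma^* \leq \sigma\bigl[\bigcup_{i \leq n} \vec{g}_{2i}\bigr]$ holds, the union is predense below $\sigma^*$ too. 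Consequently $D_n \cap \mathfrak{M}$ is predense below $\sigma^*$ for each $n$, which is exactly what is needed for properness.

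The main obstacle to expect is precisely the monotonicity $\sigma\bigl[\bigcup_{i \in \omega} \vec{g}_{2i}\bigr] \leq \sigma\bigl[\bigcup_{i \leq n} \vec{g}_{2i}\bigr]$ in the sense of Definition~\ref{Sheas31ah7ib}. Intuitively, supplying more $\vec{g}_{2i}$ merely fills in further coordinates of $\sigma$, but the remark immediately following Definition~\ref{SecfgUja682j} flags that $\sigma[\vec{g}] \leq \sigma$ can fail when an infinite parameter set is used, because condition~(\ref{BDragJG5}) of Definition~\ref{Sheas31ah7ib} may break. This coherence must therefore be verified directly, using the specific way the $\vec{g}_{2i}$ were produced during the fusion in Lemma~\ref{Tu8777hDrh75} (each $\vec{g}_{2i}$ is defined on coordinates with $d_\sigma$-value $2i$, and these coordinates are disjoint across different $i$). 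Once this is checked, the remaining ingredients --- finiteness, predensity, and membership in $\mathfrak{M}$ --- are exactly what Lemma~\ref{Tu8777hDrh75} was engineered to deliver.
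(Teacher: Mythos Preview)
Your approach is essentially the same as the paper's: both invoke the moreover clause of Lemma~\ref{Tu8777hDrh75} to get the finite witnesses $W_j$ inside $\mathfrak{M}$ and then read off genericity. The only cosmetic difference is that the paper first replaces the enumeration $\{D^*_n\}$ by the decreasing sequence $D_n=\bigcap_{j\le n}D^*_j$, so that it suffices to have $W_j\subseteq D_j\cap\mathfrak{M}$ predense below the generic condition for \emph{infinitely many} $j$; you instead verify every $D_n$ directly via condition~(\ref{ItC3}). Regarding the monotonicity you flag as an obstacle: the inequality you actually need is not $\sigma^*\le\sigma\bigl[\bigcup_{i\le n}\vec g_{2i}\bigr]$ but rather $\sigma^*\le\sigma_{2n+1}\bigl[\bigcup_{i\le n}\vec g_{2i}\bigr]$, and this is exactly what is displayed and used in the proof of Lemma~\ref{Tu8777hDrh75} when justifying~(\ref{ItC3}) (and is what Corollary~\ref{CCOOR777hDrh75} already relies on), so no extra work is required.
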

\begin{proof}
	Let $\mathfrak M$ be a countable elementary submodel of $\mathsf{H}({\kappa}^{+})$ such that $\mathcal I\in \mathfrak M$ and let $\tau\in \mathfrak M\cap \Poset(\mathcal I)$.
	Let $\{D^*_n\}_{n\in\omega}$ enumerate all the dense open subsets of $\Poset(\mathcal I)$ in $\mathfrak M$ and
	let $D_n =\bigcap_{j\leq n}D^*_j$.  Then use
	Lemma~\ref{Tu8777hDrh75} as in Corollary~\ref{CCOOR777hDrh75}  to find
	$\sigma\leq \tau$ such that for infinitely many  $j$ there is  a finite set $W_j\subseteq D_j$ such that 
	 $W_j$ is predense below  $\sigma$. 
	 Using the {\em moreover} clause of Lemma~\ref{Tu8777hDrh75}
	  it can be assumed that the $W_j$ are all subsets of $\mathfrak M$ and, therefore, $\sigma$ is $\mathfrak M$ generic.
	
\end{proof}
\subsection{Application to nowhere dense ultrafilters}
This section shows how to use the partial orders $\Poset(\mathcal I)$ to construct a model with no nowhere dense ultrafilters.	The following lemma is well known.
	\begin{lemma}\label{LabiSInznsnow}
		If $Z\subseteq 2^\omega$ is nowhere dense 
  then for each $k\in \omega$ there is some $\psi(k,Z)\in \omega$
  and
		$t:[k,\psi(k,Z))\to 2$ such that if $z\in Z$ then $t\not\subseteq z$.
	\end{lemma}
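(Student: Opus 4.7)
The plan is to exploit nowhere density of $Z$ through a short finite induction on the $2^k$ possible initial segments of length $k$. The key observation is that a single $t:[k,\psi(k,Z))\to 2$ has the property ``no $z\in Z$ extends $t$'' if and only if, for every $s\in 2^{k}$, the basic clopen cylinder $[s\smallfrown t]=\{z\in 2^{\omega}:z\restriction\psi(k,Z)=s\smallfrown t\}$ is disjoint from $Z$. The ``only if'' direction is trivial, and for the converse, any $z\in Z$ has $z\restriction k=s$ for some $s\in 2^{k}$, so $z\restriction[k,\psi(k,Z))=t$ would place $z\in [s\smallfrown t]\cap Z=\varnothing$, a contradiction. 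Since $\left\lvert 2^{k}\right\rvert$ is finite, a bounded recursion over $2^{k}$ produces such a $t$.

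Concretely, enumerate $2^{k}=\{s_{j}\}_{j<2^{k}}$, set $m_{0}=k$, and let $t_{0}$ be the empty function on $[k,k)$. Given $t_{j}:[k,m_{j})\to 2$, the nonempty basic clopen cylinder $[s_{j}\smallfrown t_{j}]$ is, by nowhere density of $Z$, not contained in $\overline{Z}$, so one can choose an extension $u_{j}\supseteq s_{j}\smallfrown t_{j}$ with $[u_{j}]\cap Z=\varnothing$. Put $m_{j+1}=\left\lvert u_{j}\right\rvert$ and $t_{j+1}=u_{j}\restriction[k,m_{j+1})$; by construction $t_{j+1}\supseteq t_{j}$. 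After $2^{k}$ stages, define $\psi(k,Z)=m_{2^{k}}$ and $t=t_{2^{k}}$. For each $j<2^{k}$ the string $s_{j}\smallfrown t$ extends $u_{j}$, hence $[s_{j}\smallfrown t]\subseteq[u_{j}]$ is disjoint from $Z$, which is exactly the condition identified above.

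There is essentially no obstacle here: the only thing to verify is that extending $t_{j}$ to $t_{j+1}$ does not disturb the disjointness established at earlier stages, and this is automatic since a longer prefix determines a smaller cylinder. The argument is purely finitary, uses nothing beyond the definition of ``nowhere dense'' in $2^{\omega}$, and makes no use of compactness or the ambient ideal-theoretic framework of the paper.
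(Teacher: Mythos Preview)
Your argument is correct and is essentially identical to the paper's proof: both enumerate $2^{k}$, run a finite induction that at stage $j$ extends the current partial function so that the cylinder determined by $s_{j}$ concatenated with it misses $Z$, and take the final extension as $t$. The only differences are notational (the paper writes $t^{*}_{j}$ and $\ell_{j}$ where you write $t_{j}$ and $m_{j}$, and it starts the enumeration at $j=1$).
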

\begin{proof}  Let
$\{s_j\}_{j=1}^M$ enumerate $2^{k}$ and choose inductively
$t^*_j:[k, \ell_j)\to 2$ such that the open neighbourhood determined by $s_j\cup t^*_j$ is disjoint from $Z$ and $t^*_j\subseteq t^*_{j+1}$. Let $\psi(k,Z)= \ell_M$ and
$t= t^*_M$.
\end{proof}

For the purposes of this section it would suffice to prove the following
lemma only for partial orders $\Poset$ satisfying the Sacks Property. However, in \S\ref{VarsrSect} it will be useful to have already established the following general lemma. 
	\begin{lemma}\label{LabiSInznsCOFnanow}
	If $\Poset$ has the Weak Sacks Property and
	 ${\sf 1}\forces{\Poset}{\dot{Z}\subseteq 2^\omega\text{ is nowhere dense}}$
then there is $p\in \Poset$, and an infinite $X\subseteq \omega$ such that for each $n\in X$
there are $N(n)$ and $S_n:[n,N(n))\to 2$ such that
	$
		p\forces{\Poset}{(\forall z\in \dot{Z}) \ S_n\not\subseteq z} $.
\end{lemma}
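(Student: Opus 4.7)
The plan is to combine the pointwise forbidden-pattern construction of Lemma~\ref{LabiSInznsnow} with the slalom bound supplied by the Weak Sacks Property, and then to extract a single forbidden pattern per level via a fusion-style argument.

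First, in the forcing extension, Lemma~\ref{LabiSInznsnow} applied to $\dot{Z}$ at each $k\in\omega$ yields a natural number $\dot{\psi}(k)=\psi(k,\dot{Z})$ and a sequence $\dot{t}_k:[k,\dot{\psi}(k))\to 2$ such that no $z\in\dot{Z}$ has $\dot{t}_k\subseteq z$. G\"odel-encoding the pair $(\dot{\psi}(k),\dot{t}_k)$ as a single natural number produces a name $\dot{f}:\omega\to\omega$ in the ground model.

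Next, I would apply the Weak Sacks Property to $\dot{f}$ with some $g:\omega\to\omega$ tending slowly to infinity (for instance $g(n)=n+1$), obtaining a condition $q\leq{\sf 1}$, a ground-model slalom $F\in\prod_n[\omega]^{g(n)}$, and an infinite set $A\subseteq\omega$ with $q\forces{\Poset}{(\forall n\in\check{A})\ \dot{f}(n)\in F(n)}$. Decoding, for each $n\in A$, yields finitely many candidate pairs $\{(\psi_i^n,t_i^n):i<m_n\}$ where $m_n\leq g(n)$ and $t_i^n:[n,\psi_i^n)\to 2$, and in any generic extension below $q$ exactly one of these pairs coincides with $(\dot{\psi}(n),\dot{t}_n)$.

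The remaining step is to verify, uniformly at infinitely many levels, a single candidate using one condition $p\leq q$. For each $n\in A$ the set of conditions below $q$ deciding $\dot{f}(n)$ is dense, so one may find $p'\leq q$ and $i<m_n$ with $p'\forces{\Poset}{\dot{f}(n)=(\psi_i^n,t_i^n)}$; such $p'$ forces $t_i^n$ to be the actual forbidden pattern, i.e. $p'\forces{\Poset}{(\forall z\in\dot{Z})\ t_i^n\not\subseteq z}$. To perform this selection uniformly I would build by induction a decreasing fusion sequence $q=p_0\geq p_1\geq p_2\geq\cdots$ in $\Poset$ together with levels $n_0<n_1<\cdots$ in $A$ so that $p_{k+1}$ decides $\dot{f}(n_k)=(\psi_{i_k}^{n_k},t_{i_k}^{n_k})$, and then take a common lower bound $p\leq p_k$ of the sequence. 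Setting $X:=\{n_k:k\in\omega\}$, $N(n_k):=\psi_{i_k}^{n_k}$, and $S_{n_k}:=t_{i_k}^{n_k}$ then yields the conclusion.

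The main obstacle is the existence of the common lower bound $p$ for the fusion sequence, which is not automatic from the Weak Sacks Property alone. In every application of this lemma in the present paper, however, $\Poset$ is either the partial order $\Poset(\mathcal{I})$ itself --- which admits $\omega$-fusion via Lemma~\ref{29aa0957Drh75} --- or a proper countable-support iteration of such forcings, for which a standard master-condition argument using an elementary submodel containing $\dot{Z}$, $q$, $F$, and $A$ produces the needed $p$.
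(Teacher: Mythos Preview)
Your fusion step is a genuine gap, and you already sense it: an arbitrary poset with the Weak Sacks Property need not admit lower bounds for countable decreasing sequences, so the descent $q=p_0\geq p_1\geq\cdots$ cannot in general be capped. Appealing to the specific forcings used later in the paper does not rescue the argument, because the lemma is stated for an arbitrary $\Poset$ with the Weak Sacks Property, and it is invoked in Theorem~\ref{NoNowjlhMyV} for $\Poset(\mathcal I)\ast\mathbb Q$ with $\mathbb Q$ an essentially arbitrary tail; setting up a uniform fusion for that two-step iteration would be at least as much work as the original lemma.

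The paper avoids fusion entirely by a diagonal trick that you are missing. First it uses $\omega^\omega$-bounding (a consequence of the Weak Sacks Property) to fix a ground-model $\Psi$ dominating $\psi(\,\cdot\,,\dot Z)$, and then, for each $n$, it stacks $n$ consecutive forbidden patterns $\dot t_{n,0},\dots,\dot t_{n,n-1}$ on the predetermined intervals $[N_i(n),N_{i+1}(n))$ with $N_0(n)=n$ and $N_{i+1}(n)=\Psi(N_i(n))$. A single application of the Weak Sacks Property with $g(n)=n$ to the name for the concatenation $\bigcup_{i<n}\dot t_{n,i}$ yields $p$, an infinite $X$, and for each $n\in X$ a set $W_n=\{s_0,\dots,s_{n-1}\}$ of $n$ candidates. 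Now define $S_n$ by taking the $i$-th interval from the $i$-th candidate: $S_n=\bigcup_{i<n}s_i\restriction[N_i(n),N_{i+1}(n))$. Whichever $s_j$ is the true value of the concatenation, $S_n$ agrees with it on the $j$-th interval, so $S_n\supseteq\dot t_{n,j}$ there and hence $S_n$ is itself forbidden for $\dot Z$. No further strengthening of $p$ is needed; the single condition produced by the Weak Sacks Property already works for every $n\in X$.
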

\begin{proof}
From Corollary~\ref{CCOOR777hDrh75} it follows  that $\Poset$ is $\omega^\omega$-bounding and, hence,
there is a function $\Psi$ such that
$${\sf 1}\forces{\Poset}{(\forall n) \ \Psi(n)\geq \psi(n,\dot{Z})} .$$
where $\psi$ is the function in Lemma~\ref{LabiSInznsnow}.
It is then possible to find names $\dot{t}_{n,i}$ for $i\in n$ such that
letting $N_0(n)=n$ and $N_{j+1}(n) = \Psi(N_j(n))$
$${\sf 1}\forces{\Poset}{(\forall i \in n\in \omega) \ \dot{t}_{n,i}:[N_i(n),N_{i+1}(n))\to 2 \And
(\forall z\in \dot{Z}) \ \dot{t}_{n,i}\not\subseteq z} .$$
Use the Weak Sacks Property of $\Poset$ to find $p\in \Poset$, and an infinite $X\subseteq \omega$
and  $W_n$ for $n\in X$ such that
 $|W_n|=n$ and
 $p\forces{\Poset}{\bigcup_{i\in n} \dot{t}_{n,i}\in W_n}$ for each $n\in X$.
 
 	Let $W_n=\{s_j\}_{j\in n}$ and let $S_n = \bigcup_{i\in n}s_i\restriction [N_i(n),N_{i+1}(n))$. 
	 Letting $N(n) = N_n(n)$ for $n\in X$
  it follows that 
  $S_n:[n,N(n))\to 2$ satisfies the lemma. 
\end{proof}
\begin{defin}
	If $G\subseteq \Poset(\mathcal {I})$ is generic define
	$F_G:\omega\to 2^\omega$ by
	$$F_G(j) =\begin{cases}
		\left(\bigcup_{\sigma\in G}\sigma(n)\right)(j)&\text{ if } j < n\\
		0 &\text{ if } j\geq n.
	\end{cases}$$
\end{defin}	
	\begin{theor}
		\label{NoNowjlhMyV}
	If $\mathcal I$ is a maximal ideal	
	and 
	$\Poset(\mathcal I)\ast \mathbb Q$ has the  Weak Sacks Property
	then for any
	$(\tau,p)$ such that 
	 $$(\tau,p)\forces{\Poset(\mathcal I)\ast \mathbb Q}{\dot{Z}\subseteq 2^\omega\text{ is nowhere dense}}$$   there are $(\sigma, q)\leq (\tau, p) $ and $U\in \mathcal I^*$ such that
		$(\sigma,q)\forces{\Poset(\mathcal I)\ast \mathbb Q}{F_{\dot{G}}({U})\cap \dot{Z}=\varnothing}$.
	\end{theor}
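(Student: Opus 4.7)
The strategy is to apply Lemma~\ref{LabiSInznsCOFnanow} to extract a ground-model family of nowhere dense barrier strings for $\dot Z$, and then to exploit the fibre structure of $\Poset(\mathcal I)$ to embed those barriers into the generic function $F_{\dot G}$ on an $\mathcal I^{*}$-large set of inputs. To begin, apply the straightforward relativization of Lemma~\ref{LabiSInznsCOFnanow} below $(\tau, p)$ to the iteration $\Poset(\mathcal I)\ast \mathbb Q$, which has the Weak Sacks Property by hypothesis. This yields a condition $(\tau_{0}, q)\leq (\tau, p)$, an infinite set $X\subseteq \omega$, and, for each $n\in X$, a value $N(n)\in\omega$ and a string $S_n:[n, N(n))\to 2$ with $(\tau_{0}, q)\forces{\Poset(\mathcal I)\ast \mathbb Q}{(\forall z\in \dot Z)\, S_n\not\subseteq z}$. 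Thin $X$ down to $X' = \{k_j\}_{j\in\omega}$ with $N(k_j) < k_{j+1}$, so the windows $[k_j, N(k_j))$ are pairwise disjoint and strictly increasing.

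Next, construct $\sigma\leq \tau_{0}$ in $\Poset(\mathcal I)$ by uniformly extending $\tau_{0}$ on each $d_{\tau_{0}}$-fibre. For $z\in \ran(d_{\tau_{0}})$ with $z < N(k_0)$, apply the trivial extension $e(z) = z$, $h_z = \varnothing$. For $z\in \ran(d_{\tau_{0}})$ with $z\geq N(k_0)$, let $j(z)$ be the largest $j$ with $N(k_j)\leq z$, put $e(z) = k_{j(z)}$, and define $h_z:[k_{j(z)}, z)\to 2$ by $h_z\restriction [k_{j(z)}, N(k_{j(z)})) = S_{k_{j(z)}}$ and $h_z(\ell) = 0$ for $\ell\in [N(k_{j(z)}), z)$. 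Finally set $\sigma(n) = \tau_{0}(n)\cup h_{d_{\tau_{0}}(n)}$, so that $d_{\sigma} = e\circ d_{\tau_{0}}$.

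The verifications proceed as follows. To see $\sigma \in \Poset(\mathcal I)$, note that the new fibre $d_\sigma^{-1}\{k_j\}$ equals the union of $d_{\tau_0}^{-1}\{z\}$ for $z \in \ran(d_{\tau_0}) \cap [N(k_j), N(k_{j+1}))$, a finite union of members of $\mathcal I$; fibres over $k \notin X'$ are handled analogously. The relation $\sigma\leq \tau_0$ is witnessed by $e$, and Condition~(\ref{BDragJG5}) of Definition~\ref{Sheas31ah7ib} holds because all $n$ in a common $d_{\tau_0}$-fibre share the same extension $h_{d_{\tau_0}(n)}$ on $[e(z), z)$. Now let $U = \{m\in \omega : d_{\tau_0}(m)\geq N(k_0)\}$; its complement is the union of $d_{\tau_0}^{-1}\{z\}$ for $z \in \ran(d_{\tau_0}) \cap [0, N(k_0))$, again a finite union of members of $\mathcal I$, so $U\in \mathcal I^{*}$. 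Finally, for $m\in U$, setting $z = d_{\tau_0}(m)$ and $n = k_{j(z)}$, we have $d_\sigma(m) = n$ and $N(n)\leq z\leq m$, so $\sigma(m)\restriction [n, N(n)) = h_z\restriction [n, N(n)) = S_n$. Since $\sigma\in \dot G$, the function $F_{\dot G}(m)$ extends $S_n$, and the barrier property forces $F_{\dot G}(m)\notin \dot Z$. Consequently, $(\sigma, q)\forces{\Poset(\mathcal I)\ast\mathbb Q}{F_{\dot G}(U)\cap \dot Z = \varnothing}$.

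The main obstacle is the tension between two requirements pulling in opposite directions: to make $U$ an $\mathcal I^{*}$-set one would like $d_\sigma$ to concentrate its range onto the sparse set $X'$, but the ideal condition $d_\sigma^{-1}\{k\}\in \mathcal I$ forbids merging too many old fibres into a single new one. The resolution is to absorb into $k_j$ only those $d_{\tau_0}$-fibres whose index lies in the bounded window $[N(k_j), N(k_{j+1}))$, so that each new fibre remains a finite union of ideal sets; what is thereby left outside $X'$ is confined to the finite initial-segment of indices $[0, N(k_0))\cap \ran(d_{\tau_0})$, which keeps $\omega \setminus U$ in $\mathcal I$.
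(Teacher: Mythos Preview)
Your argument is correct and follows the same strategy as the paper: apply Lemma~\ref{LabiSInznsCOFnanow} to obtain the barrier strings $S_n$, then define $\sigma\leq\tau_0$ by composing $d_{\tau_0}$ with a function $e$ that pushes each fibre index down to a nearby element of $X$ and extends $\tau_0(n)$ by the corresponding $S_{k_j}$ padded with zeros, so that every $F_{\dot G}(m)$ with $m$ outside a small $\mathcal I$-set contains one of the barriers. The only cosmetic differences are that the paper collapses the finitely many initial fibres to $0$ rather than leaving them fixed (yielding the same $U$ either way), and that you explicitly thin $X$ to make the windows $[k_j,N(k_j))$ disjoint, a point the paper's definition of $e$ tacitly assumes; one trivial slip is that for $j=0$ your new fibre $d_\sigma^{-1}\{k_0\}$ also picks up $d_{\tau_0}^{-1}\{k_0\}$ in addition to the fibres indexed in $[N(k_0),N(k_1))$, but this is still a finite union of $\mathcal I$-sets.
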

\begin{proof}
	Since $\Poset(\mathcal I)\ast \mathbb Q$ has the Weak Sacks Property it is possible to appeal to  Lemma~\ref{LabiSInznsCOFnanow}  to find
	$(\tau^*,q)\leq (\tau,p)$, an infinite $X\subseteq \omega$ and	$S_n:[n,{N(n)})\to 2$   such that
	\begin{equation}\label{Juj21bmm}
	(\tau^*,q)\forces{\Poset(\mathcal I)\ast\mathbb Q}{(\forall z\in \dot{Z})(\forall x\in {X}) \ S_x\not\subseteq z} .
	\end{equation}
Let $\{x_i\}_{i\in\omega}$ enumerate $X\setminus \{0\}$ in increasing order.
Define the function  $e$  by
$$e(j) = 
\begin{cases}
0 &\text{ if } j<N(x_0)\\
x_{i}
	&\text{ if } N(x_i)\leq j < N(x_{i+1}) 
\end{cases}$$
noting that $e(j)\leq j$.
Then define $\sigma$
such that $\sigma(j):[e(d_{\tau^*}(j)),j)\to 2$ by
 $$\sigma(j)(\ell) =
\begin{cases}
\tau^*(j)(\ell) &\text{ if } \ell \geq d_{\tau^*}(j))\\
S_{x_i}(\ell) &
\text{   if } x_i\leq \ell <N(x_i)\leq d_{\tau^*}(j) < N(x_{i+1}) \\
 0 &
\text{   if } N(x_i)\leq \ell < d_{\tau^*}(j)\\
0 &
\text{   if }  \ell < d_{\tau^*}(j) < N(x_0).
\end{cases}$$
Then $d_\sigma  = e\circ d_{\tau^*}$ and
 the  domain of each function 
$\sigma(j)$ is the interval $[d_\sigma(j),j)$ as required to belong to $\Poset(\mathcal I)$. Moreover, $\sigma(j)\supseteq \tau^*(j)$ for each $j\in\omega$.
 The key point to notice here is that Condition~(\ref{BDragJG5}) of Definition~\ref{Sheas31ah7ib} is satisfied. 
 To see this suppose that  $d_{\tau^*}(m) = d_{\tau^*}(n)$ and
 $e(d_{\tau^*}(m) ) = i$ then $\sigma(m)\setminus \tau^*(m)$ and
$\sigma(n)\setminus \tau^*(n)$ have the same domain and
$$S_{x_i}^{-1}\{1\} = \left(\sigma(m)\setminus \tau^*(m)\right)^{-1}\{1\} = \left(\sigma(n)\setminus \tau^*(n)\right)^{-1}\{1\} .
$$
 Hence  $\sigma\leq \tau^*$.
 
Let $U= \omega \setminus  d_{\sigma}^{-1}(0) $ and note that
$U\in \mathcal I^*$ and $$(\sigma,q)\forces{\Poset(\mathcal I)\ast\mathbb Q}{(\forall n\in U)(\exists m\in\omega) \ S_{x_m}\subseteq F_{\dot{G}}(n)}$$ and hence (\ref{Juj21bmm}) implies that
$(\sigma,q)\forces{\Poset(\mathcal I)\ast\mathbb Q}{(\forall n\in U) \  F_{\dot{G}}(n)\notin \dot{Z}}$
as required. 
\end{proof}

	\begin{corol}
	\label{CorNoNowjlhMyV}
	It is consistent that there are no nowhere dense ultrafilters.
\end{corol}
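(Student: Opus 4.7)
Over a ground model $V \models$ CH, I carry out a countable support iteration $\langle \Poset_\alpha, \dot{\mathbb{Q}}_\alpha : \alpha < \omega_2\rangle$ in which each $\dot{\mathbb{Q}}_\alpha = \Poset(\dot{\mathcal{I}}_\alpha)$ for a $\Poset_\alpha$-name $\dot{\mathcal{I}}_\alpha$ of a maximal ideal on $\omega$. By Corollaries~\ref{CCOOR777hDrh75} and~\ref{CCOOR777hDrh75A} each iterand is proper and has the Sacks Property, and hence the Weak Sacks Property, so Theorems~\ref{ShePrewws} and~\ref{ShePrewwsA} yield that $\Poset_{\omega_2}$ is proper, preserves $\omega_1$, has the Weak Sacks Property, and is $\omega^\omega$-bounding. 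Under CH in $V$ these properties ensure that CH persists in every intermediate extension $V^{\Poset_\alpha}$ with $\alpha < \omega_2$, while $V^{\Poset_{\omega_2}} \models 2^{\aleph_0} = \aleph_2$.

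The ideals $\dot{\mathcal{I}}_\alpha$ are to be specified by a standard bookkeeping so that for every $\Poset_{\omega_2}$-name $\dot{\mathcal V}$ of an ultrafilter on $\omega$ there is some $\alpha < \omega_2$ at which $\dot{\mathcal{I}}_\alpha$ is forced to equal $(\dot{\mathcal V} \cap V^{\Poset_\alpha})^{*}$. Because each intermediate $V^{\Poset_\alpha}$ satisfies CH it contains only $\aleph_1$ ultrafilters on $\omega$, so a pairing function $\omega_2 \to \omega_2 \times \omega_1$ with cofinal fibers, together with a level-by-level enumeration in $V$ of the relevant ultrafilter names, suffices in the familiar way.

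Now let $\mathcal V \in V^{\Poset_{\omega_2}}$ be an arbitrary ultrafilter on $\omega$. Because $\Poset_{\omega_2}$ is $\omega^\omega$-bounding, every real of $V^{\Poset_{\omega_2}}$ belongs to some $V^{\Poset_\alpha}$ with $\alpha < \omega_2$, and hence $\mathcal V_\alpha := \mathcal V \cap V^{\Poset_\alpha}$ is itself an ultrafilter of $V^{\Poset_\alpha}$ for every such $\alpha$. Pick a stage $\alpha$ caught by the bookkeeping for $\mathcal V$, so $\mathcal{I}_\alpha = \mathcal V_\alpha^{*}$, and let $F : \omega \to 2^\omega$ be the generic function added at stage $\alpha$ by $\Poset(\mathcal V_\alpha^{*})$. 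I claim that $F$ witnesses that $\mathcal V$ is not a nowhere dense ultrafilter. Suppose toward a contradiction some $B \in \mathcal V$ has $F[B]$ nowhere dense in $V^{\Poset_{\omega_2}}$, and let $Z = \overline{F[B]}$. Apply Theorem~\ref{NoNowjlhMyV} inside $V^{\Poset_\alpha}$ to the two-step iteration $\Poset(\mathcal V_\alpha^{*}) \ast \dot{\mathbb Q}_{[\alpha+1,\omega_2)}$, which has the Weak Sacks Property by the preservation theorems; a density argument in the generic for the tail then produces some $U \in \mathcal V_\alpha \subseteq \mathcal V$ with $F[U] \cap Z = \varnothing$ in $V^{\Poset_{\omega_2}}$. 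But then $B \cap U \in \mathcal V$ is nonempty while $F[B \cap U] \subseteq F[B] \cap F[U] \subseteq Z \cap (2^\omega \setminus Z) = \varnothing$, a contradiction.

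The main obstacle is the bookkeeping, namely arranging that $\mathcal V_\alpha$ is actually caught at some stage $\alpha$ for every ultrafilter $\mathcal V$ appearing in $V^{\Poset_{\omega_2}}$. This is a standard piece of countable-support-iteration technology underwritten by CH-preservation at intermediate stages together with the $\omega^\omega$-bounding of the iteration; once it is in place, the application of Theorem~\ref{NoNowjlhMyV} at the caught stage is routine, and makes essential use of the new Sacks Property strengthening of the forcings $\Poset(\mathcal{I})$ (via the preservation theorems for the Weak Sacks Property).
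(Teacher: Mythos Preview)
Your overall architecture matches the paper's --- a countable support iteration of the forcings $\Poset(\mathcal I)$ over a CH ground model, together with Theorem~\ref{NoNowjlhMyV} applied at a stage where the iteration catches a given ultrafilter --- but your bookkeeping step contains a genuine error that the paper's proof avoids.

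You write that ``each intermediate $V^{\Poset_\alpha}$ satisfies CH [so] it contains only $\aleph_1$ ultrafilters on $\omega$.'' This is false: under CH there are $2^{\aleph_1}$ ultrafilters on $\omega$, not $\aleph_1$. Consequently your pairing function $\omega_2 \to \omega_2 \times \omega_1$ cannot enumerate the relevant $\Poset_\alpha$-names for maximal ideals, and the ``familiar'' bookkeeping you invoke does not go through. This is exactly why the paper works over a ground model satisfying $\lozenge_{\omega_2,\omega_1}$ rather than mere CH: the diamond sequence $\{D_\alpha\}_{\alpha\in\omega_2}$ guesses, at stationarily many $\alpha$ of cofinality $\omega_1$, the $\Poset_\alpha$-name for $\dot{\mathcal U}^*\cap V[G\cap\Poset_\alpha]$ (after the usual coding of names as subsets of $\omega_2$). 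Standard reflection then gives a club of $\alpha$ at which $\dot{\mathcal U}^*\cap V[G\cap\Poset_\alpha]$ is a maximal ideal, and intersecting with the stationary set of diamond guesses yields the stage at which Theorem~\ref{NoNowjlhMyV} applies.

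A smaller point: the sentence ``Because $\Poset_{\omega_2}$ is $\omega^\omega$-bounding, every real of $V^{\Poset_{\omega_2}}$ belongs to some $V^{\Poset_\alpha}$'' gives the wrong reason. The $\omega^\omega$-bounding property does not by itself imply that reals are captured at intermediate stages; what you need is the $\aleph_2$-chain condition of $\Poset_{\omega_2}$ (which follows from properness together with $|\Poset_\alpha|=\aleph_1$). Once the bookkeeping is repaired via diamond and this justification is corrected, the remainder of your argument --- the density application of Theorem~\ref{NoNowjlhMyV} to the tail iteration, which has the Weak Sacks Property by Theorems~\ref{ShePrewws} and~\ref{ShePrewwsA} --- is essentially the paper's.
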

\begin{proof}
	Let $V$ be a model of set theory such that $\lozenge_{\omega_2,\omega_1}$ holds and this is witnessed by $\{D_\alpha\}_{\alpha\in\omega_2} $ --- in other words, $D_\alpha\subseteq \alpha$ and for every $X\subseteq \omega_2$
 $$\SetOf{\alpha\in \omega _2}{ \cof(\alpha) = \omega_1 \And X\cap\alpha =  D_\alpha}$$
 is stationary. 
Construct a countable support iteration	 $\{\Poset_\alpha\}_{\alpha\in\omega_2}$ such that for each $\alpha\in \omega_2$ if $D_\alpha$
	 is a $\Poset_\alpha$ name such that ${\sf 1}\forces{\Poset_\alpha}{D_\alpha\text{ is a maximal ideal}}$ then
	$\Poset_{\alpha+1} = \Poset_\alpha*\Poset(D_\alpha)$. Since the iteration is proper  and $|\Poset_\alpha|=\aleph_1$ for each $\alpha\in \omega_2$ it follows that if $\dot{\mathcal U}$ is a $\Poset_{\omega_2}$ name for an ultrafilter then the set of $\alpha\in \omega_2$ such that for any generic $G\subseteq \Poset_{\omega_2}$ the interpretation of 
	$D_\alpha$ in $V[G\cap \Poset_\alpha]$ is equal to the interpretation of $  \dot{\mathcal U}^*$ in $V[G\cap \Poset_\alpha]$ contains a closed unbounded set. 
	
	Since the Sacks Property is preserved by countable support iterations (see \S2.12A of Chapter~VI of \cite{MR1623206} or Application~5 on page 351 of
	\cite{MR1234283}) it follows that in the model $V[G\cap \Poset_\alpha]$ the partial order
	$\Poset(D_\alpha)\ast\Poset_{\omega_2}/\Poset_{\alpha+1}$  has the Sacks Property.
	 By applying Theorem~\ref{NoNowjlhMyV} in the model $V[G\cap \Poset_\alpha]$  it follows  that  $\Poset(D_\alpha)\ast\Poset_{\omega_2}/\Poset_{\alpha+1}$ forces the existence  of a function $F:\omega \to 2^\omega$ such that if
${F}(U)$ is nowhere dense then $U\in D_\alpha$. Hence
if ${\sf 1}\forces{\Poset_{\omega_2}}{\dot{\mathcal U}\text{ is an ultrafilter}}$ then 
$${\sf 1}\forces{\Poset_{\omega_2}}{(\exists F:\omega \to 2^\omega)(\forall Z\subseteq 2^\omega) \ \text{ if $Z$ is nowhere dense then } F^{-1}(Z)\in D_\alpha\subseteq \dot{\mathcal U}^*}$$
and so
${\sf 1}\forces{\Poset_{\omega_2}}{\text{there are no nowhere dense ultrafilters} }$.
	
\end{proof}
\subsection{Comparison with Shelah's original partial order}
\label{renaj6} It has already been remarked that the partial order of \cite{MR1690694}, which will be referred to as $\Poset_{\sf Shelah}(\mathcal I)$, does not enjoy the Sacks Property. It is the goal of this section to explain this remark. A purely superficial difference between
$\Poset_{\sf Shelah}(\mathcal I)$ and $\Poset(\mathcal I)$
of Definition~\ref{Sheas31ah7ib} is that a condition $\sigma\in \Poset_{\sf Shelah}(\mathcal I)$ is defined by using equivalence classes which would correspond to the fibres of the function $d_\sigma$ used to define $\Poset(\mathcal I)$. A more substantial difference is that if the definition of $\Poset_{\sf Shelah}(\mathcal I)$ were to be restated using the functions $d_\sigma$ instead of equivalence classes, then the following extra assumption would need to be added 
\begin{equation}\label{Extrfagfs45}d_\sigma(j) = \min((d_\sigma^{-1}(d_\sigma(j))) .
	\end{equation} In \cite{MR1690694} this extra requirement is used to establish homogeneity properties of the partial order which are not needed for the central argument, but may be useful in other contexts.

However, this extra assumption does destroy the Sacks Property. To see this, let $\dot{G}$ be a $\Poset_{\sf Shelah}(\mathcal I)$-name for the generic function obtained after forcing with
$\Poset_{\sf Shelah}(\mathcal I)$ or, in other words, forcing with the subset of conditions in $\Poset(\mathcal I)$
that satisfy Equation~(\ref{Extrfagfs45}).
Keep in mind that $\dot{G}$ is forced to be an element of $ \prod_{n\in\omega}2^n$.   Letting $g(n)=n$
it will be shown that given any $\sigma\in \Poset_{\sf Shelah}(\mathcal I)$ and any function 
$$F\in \prod_{n\in\omega}[2^n]^n$$
there is $\tau \leq \sigma$ and $j$ such that
\begin{equation}\label{Notqashw516}
	\tau\forces{\Poset_{\sf Shelah}(\mathcal I)}{\dot{G}(j)\notin F(j)} .\end{equation}
To find $\tau$ simply let $j$ be such that
$j=\min(d_\sigma^{-1}(d_\sigma(j)))>1$.
Then $|F(j)| = g(j) = j< 2^j$ and so there is some $f\in 2^j\setminus F(j)$. Defining $\tau$ by
$$\tau(\ell) = \begin{cases}
	\sigma(\ell) &\text{if } d_\sigma(\ell) \neq j\\
	f\cup \sigma(\ell) &\text{if } d_\sigma(\ell) =j
	\end{cases}$$
	it is easy to check that $\tau\leq \sigma$ and that (\ref{Notqashw516}) holds.

There are several other naturally occurring forcing notions that have the Weak Sacks Property, but not the Sacks Property.
Indeed, the forcing notion which is about to be presented in \S\ref{VarsrSect} is one of them.
Another example occurs in \S8 of \cite{souvssu}, where Definition 8.2 defines a forcing that has the Weak Sacks Property, yet not the Sacks Property and does not add an independent real, yet does not preserve P-points.
See the remarks preceding Lemma 8.13 of \cite{souvssu}.
\section{Variations on selectivity}\label{VarsrSect}
\subsection{Definition of $(m,n)$-selectivity}
The following two definitions and notation were introduced in \cite{MR3946661} to produce some examples in the theory of topological dynamics. The goal of this section is to modify the construction of \S\ref{Ex} in such a way that the resulting model contains $(m,n)$-selective ultrafilters, but still no nowhere dense ultrafilters.
\begin{defin}\label{BartZuck}
A subset $A\subseteq [\omega]^m$ is defined to be {\em  thick }
if and only if for every $n\geq m$, there is $s\in [\omega]^n$
such that $[s]^m\subseteq A$.
 A filter will be said to be \emph{thick} if each of its members is thick.
\end{defin}
\begin{defin}\label{BartZuck2}
	If $A\subseteq [\omega]^m$ and $n\geq m$ define
	$\lambda^{n-m}(A) = \SetOf{s\in [\omega]^n}{[s]^m\subseteq A}$.
	If $\mathcal F$ is a filter on $[\omega]^m$ then $\lambda^{n-m}(\mathcal F)$ is defined to be the filter generated by 
	$\SetOf{\lambda^{n-m}(A)}{A\in\mathcal F}$.
	\end{defin}
Note that if  $A\subseteq [\omega]^m$ is thick
then $\lambda^{n-m}(A)\neq \varnothing$ but that, without this assumption it can happen that
$\lambda^{n-m}(A) =  \varnothing$. Hence, the definition of $\lambda^{n-m}(\mathcal F)$ is only of interest for thick filters.
\begin{defin}\label{BartZuck3}
A thick ultrafilter $\mathcal U$ on $[\omega]^m$ is defined to be \emph{$(m,n)$-selective} if and only if ${\lambda}^{n-m}(\mathcal{U})$ is an ultrafilter.
\end{defin}
Observe that if $k \leq m \leq n$ and $\mathcal{U}$ is $(k, n)$-selective, then any ultrafilter on ${[\omega]}^{m}$ that extends ${\lambda}^{m-k}(\mathcal{U})$ will be $(m, n)$-selective.
Observe also that selective ultrafilters, when considered as ultrafilters on ${[\omega]}^{1}$, are $(1,n)$-selective for all $n \geq 1$, but it is shown in \cite{MR3946661} that $(2,3)$-selective ultrafilters may not be of the form ${\lambda}^{2-1}(\mathcal{U})$, for any selective $\mathcal{U}$, assuming   $2^{\aleph_0} = \aleph_1$.
However, Question~7.7 of \cite{MR3946661} asks whether
$(2,3)$-selective ultrafilters can be shown to exist without assuming any extra set theoretic axioms. 
Note also that by a theorem of Kunen (see Theorem~4.5.2 of \cite{MR1350295}) it follows that
$(1,2)$-selective ultrafilters are already selective. 

An attempt at providing a negative answer will, among other things, require producing a model of set theory without selective ultrafilters. 
 It has already been noted in \S\ref{ievads} that several such models are known. 
Among these models the one that seems to eliminate the most ultrafilters with peculiar properties seems to the model of \cite{MR1690694} 
described in  \S\ref{Ex}.
Indeed, the optimist may even conjecture that the existence of a \((2,3)\)-selective ultrafilter implies the existence of a nowhere dense one and, hence, there are no $(2,3)$-selective ultrafilters if there are no nowhere dense ones.
It should be noted, however, that a result of Barto\v{s}ov\'{a} and Zucker from \cite{MR3946661} shows that to prove that all $(2, 3)$-selectives are nowhere dense it is necessary to assume  some extra set-theoretic axioms.
The following is a corollary to the proof of Theorem~7.2 in \cite{MR3946661}.
\begin{Theor}[Barto\v{s}ov\'{a} and Zucker~\cite{MR3946661}] \label{thm:BZ}
 If $2^{\aleph_0} = \aleph_1$
 then for every ultrafilter \(\mathcal{U}\), there exists a $(2, 3)$-selective ultrafilter $\mathcal{V}$ such that $\mathcal{U} \; {\leq}_{RK} \; \mathcal{V}$.
\end{Theor}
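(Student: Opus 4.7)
The plan is to use CH to construct $\mathcal V$ by a transfinite recursion of length $\omega_1$, realizing the Rudin--Keisler reduction through the concrete map $f\colon[\omega]^2\to\omega$ defined by $f(\{a,b\})=\min\{a,b\}$. I will start with $\mathcal V_0$, the thick filter generated by $\{f^{-1}(U):U\in\mathcal U\}$: each generator $f^{-1}(U)=\{\{a,b\}:a<b,\ a\in U\}$ is thick because any $k-1$ elements of $U$ together with a single larger element of $\omega$ form a clique of size $k$ inside it, and finite intersections of generators are again of the form $f^{-1}(U')$. Using CH, I will enumerate the subsets of $[\omega]^2$ as $\{Y_\alpha:\alpha<\omega_1\}$ and the $2$-colorings $c_\alpha\colon[\omega]^3\to 2$ as $\{c_\alpha:\alpha<\omega_1\}$, interleaving the two kinds of tasks in the recursion.

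The recursion maintains thick filters $\mathcal V_\alpha$ each generated by countably many sets, with limit stages handled by taking the filter generated by the union. The successor step is the combinatorial heart. Given that $\mathcal V_\alpha$ is generated by $B_1,B_2,\ldots$, I will inductively choose finite sets $s_n\in[\omega]^{r_n}$ with $\min(s_n)>\max(s_{n-1})$ and $[s_n]^2\subseteq B_1\cap\cdots\cap B_n$, where $r_n$ is the finite Ramsey number forcing a subset $s'_n\subseteq s_n$ of size $n+2$ that is homogeneous for the current task---either for the triple coloring $c_\alpha$ on $[s'_n]^3$, or for the edge coloring given by $Y_\alpha$ on $[s'_n]^2$. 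These $s_n$ can always be found because each intersection $B_1\cap\cdots\cap B_n$ is thick by induction, and a thick set contains cliques of any prescribed size inside any tail $(M,\infty)$ (just take the top elements of a slightly larger clique). After thinning to a subsequence on which the homogeneous color is constantly $i$, I set $A=\bigcup_n[s'_n]^2$; then $A\cap B_1\cap\cdots\cap B_m\supseteq[s'_n]^2$ for all $n\ge m$, so $\mathcal V_\alpha\cup\{A\}$ generates a thick filter $\mathcal V_{\alpha+1}$. Because the blocks $s'_n$ sit in pairwise disjoint intervals of $\omega$, any triple $t\in[\omega]^3$ with $[t]^2\subseteq A$ is forced to lie inside a single $s'_n$, which makes $c_\alpha(t)=i$; hence $\lambda^1(A)\subseteq c_\alpha^{-1}(i)$, and the analogous reasoning for the edge task places $Y_\alpha$ or its complement into $\mathcal V_{\alpha+1}$.

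Taking $\mathcal V=\bigcup_{\alpha<\omega_1}\mathcal V_\alpha$, the enumeration of $\{Y_\alpha\}$ makes $\mathcal V$ an ultrafilter on $[\omega]^2$ whose members are all thick, and the enumeration of $\{c_\alpha\}$ makes $\lambda^1(\mathcal V)$ an ultrafilter on $[\omega]^3$, so $\mathcal V$ is $(2,3)$-selective; and $f^{-1}[\mathcal U]\subseteq\mathcal V_0$ combined with $\mathcal V$ being an ultrafilter yields $f_*(\mathcal V)=\mathcal U$, so $\mathcal U\leq_{\sf RK}\mathcal V$. The main obstacle will be the disjoint-tops construction in the successor step: without the clause $\min(s_n)>\max(s_{n-1})$, a triple $t$ with $[t]^2\subseteq A$ could draw its three edges from three distinct blocks $s'_n$ and escape the Ramsey monochromaticity of any individual block. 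Arranging this disjointness simultaneously with $[s_n]^2\subseteq B_1\cap\cdots\cap B_n$---equivalently, verifying that thickness is preserved under intersection inside a thick filter and that thick sets have cliques of every size above any prescribed threshold---is the one genuinely nontrivial combinatorial input.
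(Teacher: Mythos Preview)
The paper does not supply a proof of this statement; it is attributed to Barto\v{s}ov\'{a} and Zucker and described as ``a corollary to the proof of Theorem~7.2'' in their paper, so there is nothing here to compare your argument against.

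Your argument, however, contains a genuine gap. You set $\mathcal V_0$ equal to the filter generated by $\{f^{-1}(U):U\in\mathcal U\}$ and then assert that ``the recursion maintains thick filters $\mathcal V_\alpha$ each generated by countably many sets'', so that at a successor step you may list generators as $B_1,B_2,\dotsc$. But $\mathcal V_0$ is \emph{not} countably generated: a non-principal ultrafilter on $\omega$ has no countable base, and neither does its pullback along $f$. Consequently your set $A=\bigcup_n[s'_n]^2$ is only shown to be thick-compatible with whatever countable list you happened to fix, not with all of $\mathcal V_0$, and the conclusion ``$\mathcal V_\alpha\cup\{A\}$ generates a thick filter'' does not follow.

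This is not repairable by routine re-indexing; the disjoint-blocks shape of $A$ is the obstruction. Suppose $\mathcal U$ is a Q-point. If $S=\bigcup_n s'_n\notin\mathcal U$ then $A\cap f^{-1}(\omega\setminus S)=\varnothing$ and the extended filter is improper. If $S\in\mathcal U$, apply the Q-point property to the finite-to-one map sending $x\in s'_n$ to $n$ to obtain $V\in\mathcal U$ with $|V\cap s'_n|\le 1$ for every $n$. Then $A\cap f^{-1}(V)$ contains no $3$-clique: any $\{a<b<c\}$ with all three pairs in $A\cap f^{-1}(V)$ would lie in a single $s'_n$ and force $a=b=\min(V\cap s'_n)$. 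Hence $A\cap f^{-1}(V)$ is not thick, and $\langle\mathcal V_0\cup\{A\}\rangle$ fails to be a thick filter \emph{no matter how} the blocks $s'_n$ were chosen. A correct proof must therefore either abandon the disjoint-blocks form of the homogeneous set or use a different Rudin--Keisler witness than $f(\{a,b\})=\min\{a,b\}$.
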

Theorem \ref{thm:BZ} has a consequence that is worth explicitly pointing out: there is no non-trivial ideal $\mathcal{I}$ for which it is possible to prove that all $(2, 3)$-selectives are $\mathcal{I}$-ultrafilters without assuming some extra set-theoretic axioms.
In particular, $(2, 3)$-selectives are not provably nowhere dense.
\begin{corol} \label{cor:BZ}
  If $2^{\aleph_0} = \aleph_1$
  and $\mathcal{I}$ is a non-principal ideal on the set $X$ such that ${\left[X\right]}^{{\aleph}_{0}} \not\subseteq \mathcal{I}$ then there exists a $(2, 3)$-selective ultrafilter that is not an $\mathcal{I}$-ultrafilter.
\end{corol}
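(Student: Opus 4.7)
The plan is to reduce the problem to finding an ordinary ultrafilter on $\omega$ that fails to be an $\mathcal{I}$-ultrafilter, and then to apply Theorem~\ref{thm:BZ} to lift it to a $(2,3)$-selective ultrafilter. The key observation justifying this reduction is that failing to be an $\mathcal{I}$-ultrafilter is preserved upwards in the Rudin--Keisler order.

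First I would use the hypothesis $[X]^{\aleph_0} \not\subseteq \mathcal{I}$ to select an injectively enumerated countable set $Y = \{y_n : n \in \omega\} \subseteq X$ with $Y \notin \mathcal{I}$, and define $F:\omega \to X$ by $F(n) = y_n$. The collection $\mathcal{J} = \{B \subseteq \omega : F[B] \in \mathcal{I}\}$ is an ideal on $\omega$, and it is proper precisely because $F[\omega] = Y \notin \mathcal{I}$. Extending the dual filter of $\mathcal{J}$ to an ultrafilter $\mathcal{U}$ on $\omega$ produces an ultrafilter for which $F[B] \notin \mathcal{I}$ holds for every $B \in \mathcal{U}$; in other words, $F$ witnesses that $\mathcal{U}$ is not an $\mathcal{I}$-ultrafilter.

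Next I would invoke Theorem~\ref{thm:BZ}, which requires $2^{\aleph_0} = \aleph_1$, to produce a $(2,3)$-selective ultrafilter $\mathcal{V}$ on $[\omega]^{2}$ together with a function $h: [\omega]^2 \to \omega$ pushing $\mathcal{V}$ forward to $\mathcal{U}$. I would then verify that $F \circ h : [\omega]^2 \to X$ witnesses that $\mathcal{V}$ fails to be an $\mathcal{I}$-ultrafilter: for any $C \in \mathcal{V}$, the preimage $h^{-1}(h[C])$ contains $C$ and so belongs to $\mathcal{V}$, whence $h[C] \in \mathcal{U}$, and consequently $(F\circ h)[C] = F[h[C]] \notin \mathcal{I}$.

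There is no real obstacle here once Theorem~\ref{thm:BZ} is in hand; all the substantive work is contained in that theorem. The only point worth a moment's attention in the present argument is the use of the hypothesis $[X]^{\aleph_0}\not\subseteq \mathcal{I}$, which is precisely what is needed to build the initial ultrafilter $\mathcal{U}$ on $\omega$ by guaranteeing that the pullback ideal $\mathcal{J}$ is proper.
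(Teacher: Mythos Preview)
Your proof is correct and follows essentially the same approach as the paper. The only cosmetic difference is that you phrase things in terms of the pullback ideal $\mathcal{J}=\{B:F[B]\in\mathcal{I}\}$ and then pass to its dual filter, whereas the paper writes down the dual filter $\mathcal{F}=\{f^{-1}(Y\setminus A):A\in\mathcal{I}\}$ directly; the resulting ultrafilter $\mathcal{U}$, the composition $F\circ h$, and the verification are the same.
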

\begin{proof}
 By hypothesis, there is a countably infinite set $Y \subseteq X$ with $Y \notin \mathcal{I}$.
 Fix a bijection $f: \omega \rightarrow Y$.
 Define $\mathcal{F} = \{{f}^{-1}(Y \setminus A): A \in \mathcal{I}\}$.
 Then $\mathcal{F}$ is a non-principal proper filter on $\omega$ because $\mathcal{I}$ is non-principal and $Y \notin \mathcal{I}$.
 Extend $\mathcal{F}$ to an ultrafilter $\mathcal{U}$ and apply Theorem \ref{thm:BZ} to find a $(2, 3)$-selective $\mathcal{V}$ such that $\mathcal{U} \; {\leq}_{RK} \; \mathcal{V}$, witnessed by some $g: \omega \rightarrow \omega$.
 Then $f \circ g: \omega \rightarrow X$ witnesses that $\mathcal{V}$ is not an $\mathcal{I}$-ultrafilter.
\end{proof}
Obviously, Theorem \ref{thm:BZ} and its Corollary \ref{cor:BZ} are silent on the possibility that nowhere dense ultrafilters, or even P-points, might exist whenever $(2, 3)$-selective ultrafilters do.
It is the purpose of this section to show that the existence of $(2, 3)$-selectives does not imply the existence of a nowhere dense ultrafilter and that, even more, there is good reason to believe that the model of
\cite{MR1690694} contains $(2,3)$-selectives.

This statement requires some explanation. As has been seen in  \S\ref{Ex}, the model of
\cite{MR1690694} is a countable support iteration of partial orders designed to introduce functions that will prevent any ultrafilter from becoming nowhere dense. The partial orders consist of sequences of sets from the dual ideal with some additional structure. 
Each of the iterands is proper, $\omega^\omega$-bounding and has the  Sacks Property.
While it will not be shown in this note that there are (2,3)-selectives in this specific model, it will be shown that it is possible to interleave the iteration with partial orders that are
proper, $\omega^\omega$-bounding and have the Weak Sacks Property in such a way that the resulting model does have a (2,3)-selective ultrafilter.
Indeed, it will be shown that there is an ultrafilter
on $[\omega]^2$ that is a $(2,d)$-selective ultrafilter for all $d\geq 2$ simultaneously.
It is easy to see that if $\mathcal{U}$ is a $(2, m)$-selective ultrafilter for each $2 \leq m \leq d$, then ${\lambda}^{m - 2}(\mathcal{U})$ is an $(m, d)$-selective ultrafilter.
Note that this is as strong a version of selectivity as one might expect in a model with no nowhere dense ultrafilters because a $(1,2)$-selective ultrafilter is a selective ultrafilter. 
 \begin{defin}
  A thick ultrafilter on $[\omega]^2$ that is a $(2,d)$-selective ultrafilter for all $d\geq 2$ simultaneously will be called a \emph{$(2,\aleph_0)$-selective} ultrafilter.
 	\end{defin}
\subsection{A general partial order}
\label{SSag}
The modification of the model of \cite{MR1690694} will 
require constructing a $(2,\aleph_0)$-selective ultrafilter by an iteration of length $\omega_2$. At each stage of the construction a thick set $Y\subseteq [\omega]^2$ will be added so that for a given partition of
$[\omega]^d= A_0 \cup A_1$ either $\lambda^{d-2}(Y)\subseteq A_0$ or 
$\lambda^{d-2}(Y)\subseteq A_1$. 
The partial orders for doing this will be special instances of a more general partial order described in this subsection.

\begin{theor}[Jalali-Naini, Talagrand \cite{MR0579439, JN}]\label{NajMTal}  An ideal $\mathcal I$ on $\omega$ is non-meagre if and only if for every
increasing sequence of integers $\{n_i\}_{i\in\omega}$ there is an infinite $X\subseteq \omega$ such that $\bigcup_{i\in x}[n_i, n_{i+1})\in \mathcal I$.
\end{theor}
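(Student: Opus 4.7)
The plan is to prove both directions by contrapositive, writing $I_i = [n_i, n_{i+1})$ and viewing $\mathcal{I}$ as a downward-closed subset of $2^\omega$. Throughout, since $\mathcal{I}$ is downward closed, $\bigcup_{i \in X} I_i \in \mathcal{I}$ for some infinite $X$ is equivalent to the existence of $B \in \mathcal{I}$ with $I_i \subseteq B$ for infinitely many $i$.

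For the direction ``non-meagre implies the condition,'' I would argue the contrapositive: assume some increasing $\{n_i\}$ admits no infinite $X$ with $\bigcup_X I_i \in \mathcal{I}$, and define $C_k = \{B \in 2^\omega : I_i \not\subseteq B \text{ for all } i \geq k\}$. Each $C_k$ is closed, being an intersection of clopen conditions, and nowhere dense, since any $s \in 2^{<\omega}$ may be extended by forcing ones on some $I_i$ with $n_i \geq |s|$ and $i \geq k$. The hypothesis forces $\{i : I_i \subseteq B\}$ to be finite for every $B \in \mathcal{I}$, so $\mathcal{I} \subseteq \bigcup_k C_k$ and $\mathcal{I}$ is meagre.

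For the converse I would assume $\mathcal{I}$ is meagre and write $\mathcal{I} \subseteq \bigcup_k F_k$ with each $F_k$ closed nowhere dense and increasing. The crucial preliminary step is to replace $F_k$ by its downward closure $H_k = \{B : B \subseteq B' \text{ for some } B' \in F_k\}$, which remains closed (as a projection along the compact relation $B \subseteq B'$) and still contains $\mathcal{I}$. Nowhere density of $H_k$ follows from the observation that if $[s] \subseteq H_k$ then for every $t \sqsupseteq s$ the maximal extension $t \cup \mathbf{1}_{[|t|, \infty)}$ must lie in $F_k$; choosing $t$ inside a basic clopen disjoint from $F_k$, which exists by nowhere density of $F_k$, yields a contradiction.

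Having made each $F_k$ downward closed, I would construct $\{n_i\}$ inductively. Given $n_k$, for each $s \in 2^{n_k}$ form $A_s = \{j > n_k : [s^\frown \mathbf{1}_{[n_k, j)}] \cap F_k \neq \varnothing\}$. The key compactness argument shows $A_s$ must be finite: if infinite, closedness of $F_k$ would yield $s \cup \mathbf{1}_{[n_k, \infty)} \in F_k$, and downward closure would then force the nonempty clopen set $\{B : B(j) = 0 \text{ whenever } j < n_k \text{ and } s(j) = 0\}$ into $F_k$, contradicting nowhere density. Setting $n_{k+1}$ above $\max_s \max A_s$ ensures $[s^\frown \mathbf{1}_{[n_k, n_{k+1})}] \cap F_k = \varnothing$ for every $s \in 2^{n_k}$. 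Any $B \in \mathcal{I}$ lies in some $F_k$, and hence in $F_i$ for all $i \geq k$, so $I_i \subseteq B$ forces $i < k$, ruling out infinite $X$ with $\bigcup_X I_i \in \mathcal{I}$. The main difficulty is the downward-closure upgrade of the $F_k$; it is essential because nowhere density alone yields some extension of $s$ missing $F_k$, not the specific all-ones extension required to align with the interval partition.
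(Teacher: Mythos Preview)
The paper does not prove this theorem; it is quoted as a known result with a citation, so there is no in-paper proof to compare against. Your first direction (failure of the interval condition implies meagre) is correct.

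The second direction has a genuine gap at the step ``$H_k$ is nowhere dense.'' The downward closure of a closed nowhere dense set need not be nowhere dense: take $F_k=\{\mathbf{1}_\omega\}$, a single point; then $H_k=\{B:B\subseteq\omega\}=2^\omega$. Your argument breaks precisely where you assert that $t\cup\mathbf{1}_{[|t|,\infty)}$ must lie in $F_k$. From $[s]\subseteq H_k$ you only get that \emph{some} $B'\in F_k$ satisfies $B'\supseteq t\cup\mathbf{1}_{[|t|,\infty)}$; such a $B'$ may take the value $1$ on coordinates $j<|t|$ where $t(j)=0$, so $B'$ need not belong to the cylinder $[t]$ you chose disjoint from $F_k$, and no contradiction results. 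Since the rest of your construction of the $n_k$ relies on $H_k$ being both hereditary and nowhere dense, the whole reverse implication collapses.

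The standard repair avoids making the covering sets hereditary and instead exploits heredity at the level of elements of $\mathcal{I}$. Use the characterisation of meagre subsets of $2^\omega$ to obtain intervals $J_k=[m_k,m_{k+1})$ and $x\in 2^\omega$ with $\mathcal{I}\subseteq\{A:\forall^\infty k\ A\restriction J_k\neq x\restriction J_k\}$. Given $B\in\mathcal{I}$, apply this to $B\cap x\in\mathcal{I}$: since $(B\cap x)\restriction J_k\leq x\restriction J_k$ pointwise, the inequality $(B\cap x)\restriction J_k\neq x\restriction J_k$ forces some $j\in J_k$ with $x(j)=1$ and $j\notin B$, hence $J_k\not\subseteq B$. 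Thus $\{k:J_k\subseteq B\}$ is finite for every $B\in\mathcal{I}$, and $n_i=m_i$ witnesses the failure of the interval condition.
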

\begin{corol}\label{Fhvcttar5}
If $\mathcal I$ is a non-meagre ideal on $\omega$ and $\Poset$ is an $\omega^\omega$ bounding partial order then 
${\sf 1}\forces{\Poset}{\mathcal I \text{ generates a non-meagre ideal}}$.
\end{corol}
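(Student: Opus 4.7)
I would argue by contradiction in the generic extension $V[G]$. Suppose that the ideal $\langle \mathcal{I}\rangle$ generated by $\mathcal{I}$ in $V[G]$ is meagre. Applying Theorem~\ref{NajMTal} in $V[G]$, there is an increasing sequence $\{n_i\}_{i\in\omega} \in V[G]$ such that no infinite union $\bigcup_{i\in X}[n_i, n_{i+1})$ belongs to $\langle \mathcal{I}\rangle$; equivalently, for every $A\in \mathcal{I}$ the set $\{i : [n_i, n_{i+1}) \subseteq A\}$ is finite.

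Next, I would use $\omega^\omega$-bounding to extract a ground-model partition that refines the one in $V[G]$ in the right direction. Since $\Poset$ is $\omega^\omega$-bounding and $i \mapsto n_i$ is a function in $V[G]$ from $\omega$ to $\omega$, there is a strictly increasing $g\in V$ with $g(j)\geq n_j$ for every $j$. Define a ground-model sequence by $m_0 = 0$ and $m_{k+1} = g(m_k + 1)$. The crucial observation is that $[n_{m_k}, n_{m_k+1}) \subseteq [m_k, m_{k+1})$: the right-hand endpoint satisfies $n_{m_k+1}\leq g(m_k+1) = m_{k+1}$ by bounding, and the left-hand endpoint satisfies $n_{m_k}\geq m_k$ because $\{n_i\}$ is a strictly increasing sequence of natural numbers, hence $n_i\geq i$.

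Finally, I would apply Theorem~\ref{NajMTal} in $V$ to the ground-model sequence $\{m_k\}_{k\in\omega}$. Since $\mathcal{I}$ is non-meagre in $V$, there is an infinite $X\subseteq \omega$ with
\[
A \;:=\; \bigcup_{k\in X}[m_k, m_{k+1}) \;\in\; \mathcal{I}.
\]
Set $Y = \{m_k : k\in X\}$, which is infinite. The interval containment above yields $\bigcup_{i\in Y}[n_i, n_{i+1}) \subseteq A$, so this union lies in $\langle \mathcal{I}\rangle$, contradicting the choice of $\{n_i\}$.

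The main obstacle is the interval-refinement step in the middle paragraph: the ground-model intervals $[m_k, m_{k+1})$ must each \emph{contain} a full $V[G]$-interval $[n_i, n_{i+1})$, not merely overlap with the sequence of breakpoints. A naive choice like $m_{k+1}=g(m_k)$ does not force a breakpoint to lie strictly inside $[m_k, m_{k+1})$; the shift by one in $m_{k+1} = g(m_k+1)$ is what pushes $n_{m_k+1}$ below $m_{k+1}$ while keeping $n_{m_k}$ at or above $m_k$, which is exactly what is needed to translate the ground-model witness back into a $V[G]$-witness.
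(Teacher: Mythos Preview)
Your argument is correct and is precisely the standard derivation the paper has in mind: the corollary is stated without proof, as an immediate consequence of the Jalali-Naini--Talagrand characterization combined with $\omega^\omega$-bounding, and your interval-refinement via $m_{k+1}=g(m_k+1)$ is exactly the routine way to carry this out. One cosmetic point: $\omega^\omega$-bounding is usually stated with eventual domination, but a density argument below any condition upgrades this to everywhere domination, so your ``$g(j)\geq n_j$ for every $j$'' is justified.
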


\begin{defin}
Given a filter $\mathcal F$ on a countable set $Z$ say that {\em {\sf Player 1} has a winning strategy in the game $\Game(\mathcal F)$} if there is a tree
$\Sigma\subseteq \left([Z]^{<\aleph_0}\right)^{<\omega}$ such that
\begin{itemize}
\item for each $\sigma \in \Sigma$ there is $A_\sigma\in \mathcal F$ such that
$[A_\sigma]^{<\aleph_0}\subseteq \suc_\Sigma(\sigma)$
\item $\bigcup_{n\in\omega}\left(\bigcup B\right)(n)\notin \mathcal F$ for every branch $B\subseteq \Sigma$.
\end{itemize}
And, of course, if it fails to be the case that {\sf Player 1} has a winning strategy in the game $\Game(\mathcal F)$ then it will be said that {\em 
{\sf Player 1} has no winning strategy in the game $\Game(\mathcal F)$.}
\end{defin}
\begin{lemma}\label{Hjhg65y}
If $\mathcal F$ is a non-meagre, P-filter then
{\sf Player 1} has no winning strategy in the game $\Game(\mathcal F)$.
\end{lemma}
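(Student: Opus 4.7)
The plan is to argue by contradiction: assume $\Sigma$ is a tree witnessing that Player 1 has a winning strategy in $\Game(\mathcal F)$, so each $\sigma \in \Sigma$ has an associated $A_\sigma \in \mathcal F$ with $[A_\sigma]^{<\aleph_0} \subseteq \suc_\Sigma(\sigma)$ and every branch $B$ has $\bigcup_n B(n) \notin \mathcal F$, and then construct a branch $B = \langle b_n\rangle_{n<\omega}$ with $\bigcup_n b_n \in \mathcal F$ to derive a contradiction.

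I build $b_n$ recursively, together with a decreasing sequence $B_n \in \mathcal F$ and an increasing sequence $m_0 < m_1 < \cdots$ in $\omega$. Setting $\sigma_n := \langle b_0, \ldots, b_{n-1}\rangle$ and $B_n := \bigcap_{i \leq n} A_{\sigma_i} \in \mathcal F$, at each stage I pick some $m_{n+1} > m_n$ and let $b_n := B_n \cap [m_n, m_{n+1})$. This is a finite subset of $A_{\sigma_n}$, so $\sigma_{n+1} := \sigma_n \frown \langle b_n\rangle \in \Sigma$ and the recursion continues with $B_{n+1} := B_n \cap A_{\sigma_{n+1}}$. The branch so produced satisfies $\bigcup_n b_n = \bigcup_n (B_n \cap [m_n, m_{n+1}))$.

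The two hypotheses on $\mathcal F$ work in concert to force this union into $\mathcal F$. The P-filter property yields $B^* \in \mathcal F$ with $B^* \subseteq^* B_n$ for each $n$; writing $B^* \setminus [0, k_n) \subseteq B_n$ with $\{k_n\}$ increasing, if the $m_n$ dominate the $k_n$ then $B^* \cap [m_n, m_{n+1}) \subseteq b_n$ for every $n$, whence $\bigcup_n b_n \supseteq B^* \setminus [0, m_0) \in \mathcal F$. The non-meagreness of $\mathcal F$, via the Jalali-Naini-Talagrand Theorem~\ref{NajMTal} applied to the dual ideal, is exactly the tool that enables this domination to be achieved by passing to an infinite subfamily of stages: for the partition determined by the $\{k_n\}$ there is an infinite $Y \subseteq \omega$ with $\bigcup_{n \in Y}[k_n, k_{n+1}) \in \mathcal F$, and the desired alignment holds after restricting to $Y$.

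The main obstacle is a circular dependence: the witnesses $k_n$ are determined by the full sequence $\{B_n\}$, which is itself controlled by the $m_n$ chosen during the recursion. My plan for side-stepping this is to carry along an auxiliary decreasing sequence $B^*_n := B^*_{n-1} \cap B_n$ with invariant $B^*_n \cap [m_i, \infty) \subseteq B_i$ for all $i \leq n$, so that a pseudo-intersection $B^\infty$ of the $B^*_n$ (again by the P-filter property) has its error relative to each $B^*_n$ bounded by data independent of the as-yet-unchosen $m_j$, and then to deploy the non-meagreness at the end to pick the infinite subsequence of stages along which the inequality $m_n \geq k_n$ holds. All remaining steps are routine filter manipulations, but the bookkeeping for this alignment is the heart of the argument.
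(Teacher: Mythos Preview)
Your diagnosis of the circularity is exactly right, but your proposed workaround does not break it. Observe that your auxiliary sequence $B^*_n := B^*_{n-1}\cap B_n$ is simply $B_n$ again, since $B_n=\bigcap_{i\le n}A_{\sigma_i}$ is already decreasing; in particular the invariant you state is trivially true and buys nothing. The crucial assertion --- that the errors $k_n$ witnessing $B^\infty\setminus k_n\subseteq B_n$ are ``bounded by data independent of the as-yet-unchosen $m_j$'' --- is unsupported and in general false: each $B_n$ is determined by $\sigma_n$, which is determined by $m_0,\dots,m_n$, and the pseudo-intersection $B^\infty$ together with its error bounds therefore depends on the entire sequence $\{m_j\}$. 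Once the $m_j$ are fixed you cannot retroactively arrange $m_n\ge k_n$ along any prescribed infinite set, and the non-meagreness of $\mathcal F$ gives you no leverage here because the partition you would feed into Theorem~\ref{NajMTal} is itself entangled with the objects you are trying to control.

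The standard argument (which is what the paper is invoking) dissolves the circularity by \emph{not} committing to a single play when forming the decreasing sequence. For each $k$ set
\[
E_k=\bigcap\bigl\{A_\sigma:\sigma\in\Sigma,\ |\sigma|\le k,\ \sigma(i)\subseteq k\text{ for all }i\bigr\},
\]
a finite intersection, hence in $\mathcal F$. This sequence is fixed in advance, so the P-filter property yields $E^*\in\mathcal F$ with $E^*\setminus h(k)\subseteq E_k$ for an increasing $h$, and \emph{now} $h$ is genuinely independent of any play. Setting $n_0=0$, $n_{i+1}=h(n_i)$ and applying Theorem~\ref{NajMTal} to $\{n_i\}$ produces an infinite $Z$ with $\bigcup_{i\in Z}[n_i,n_{i+1})\in\mathcal F^*$. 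Using the blocks indexed by $Z$ as the gaps between successive moves, Player~2 plays $b_j=E^*\cap[n_{z_{j-1}+1},n_{z_j})$; each such $b_j$ lies in $E_{n_{z_{j-1}}}\subseteq A_{\sigma_j}$ because every earlier move is contained in $n_{z_{j-1}}$, and $\bigcup_jb_j\supseteq E^*\setminus\bigcup_{i\in Z}[n_i,n_{i+1})\in\mathcal F$. The missing idea in your attempt is precisely this quantification over \emph{all} bounded positions rather than the single play under construction.
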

\begin{proof}
Use Theorem~\ref{NajMTal} and the standard proof of the game characterization of P-points as, for example, in Theorem~4.4.4 of \cite{MR1350295}.
\end{proof}

\begin{defin}\label{Dfif9tqqehgPo}
If
 $\mathcal V$ is a filter on $\omega$ and
 ${\bf T}\subseteq Z^{<\omega}$ is a tree
 define $\Poset(\mathcal V, {\bf T})$ to consist of trees $T\subseteq {\bf T}$ such that
 $$\SetOf{n\in\omega}{
(\forall t\in T) \ \text{ if }|t| = n \text{ then }
\textstyle{\suc_T(t) = \suc_{\bf T}(t)}
}\in \mathcal V$$
where $\suc_T(t) = \SetOf{x}{t^\frown x\in T}$.
The ordering on $\Poset(\mathcal V, P)$ is $\subseteq$.
 \end{defin}

 \begin{notat}
For a tree $T$ and $k\in \omega$ let $T\restriction k = \SetOf{t\in T}{|t| = k}$ and for $t\in T$ let $T\angbr{t} = \SetOf{s\in T}{s\subseteq t \text{ or } t\subseteq s}$.
 \end{notat}

\begin{lemma}\label{Lakk6hy7}
Suppose that $\mathcal V$ is a non-meagre P-filter on $\omega$ and ${\bf T}\subseteq Z^{<\omega}$ is a tree with no terminal nodes.
Then $\Poset(\mathcal V, {\bf T})$ of Definition~\ref{Dfif9tqqehgPo} is proper and has the Weak Sacks Property.
\end{lemma}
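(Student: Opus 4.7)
The approach is a fusion argument driven by Player 2's winning strategy in the game $\Game(\mathcal V)$, which is guaranteed by Lemma~\ref{Hjhg65y}. The setup is standard: for $T', T \in \Poset(\mathcal V, {\bf T})$ and $k \in \omega$, write $T' \leq_k T$ to mean $T' \leq T$ and $T' \restriction (k+1) = T \restriction (k+1)$. A fusion sequence $T_0 \geq_{k_0} T_1 \geq_{k_1} \cdots$ in which each $k_n$ is a good level of $T_n$ has a limit $T_\omega = \bigcap_n T_n$ that is a condition refining every $T_n$, provided the set $\{k_n : n \in \omega\}$ lies in $\mathcal V$; the point is that preserving $T_n \restriction (k_n+1)$ (so in particular all ${\bf T}$-successors at level $k_n$) makes $k_n$ a good level of $T_\omega$, so the good-level set of $T_\omega$ contains $\{k_n\}$ and thus belongs to $\mathcal V$.

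To secure $\{k_n\} \in \mathcal V$ during the fusion, I play $\Game(\mathcal V)$ on the side. At round $n$ a fictitious Player 1 presents the tail of good levels of the current condition $T_n$ beyond $k_{n-1}$; Player 2, following her winning strategy supplied by Lemma~\ref{Hjhg65y}, responds with a finite subset, and I adopt those as the next fusion levels before advancing. Since Player 1 has no winning strategy, the union of Player 2's moves --- which will be the eventual set of fusion levels --- can be arranged to lie in $\mathcal V$.

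For properness, fix a countable elementary submodel $\mathfrak M \prec \mathsf{H}(\kappa^+)$ with $\mathcal V, {\bf T}, T \in \mathfrak M$ and enumerate, inside $\mathfrak M$, pairs $(D_n, t_n)$ where $D_n$ ranges over dense open subsets of $\Poset(\mathcal V, {\bf T})$ in $\mathfrak M$ and $t_n$ runs through nodes to be handled, via a bookkeeping that hits every relevant pair cofinally. At stage $n$, use elementarity to pick $T^{t_n} \in D_n \cap \mathfrak M$ with $T^{t_n} \leq T_n \angbr{t_n}$, and amalgamate by replacing $T_n \angbr{t_n}$ with $T^{t_n}$ above the fusion threshold to obtain $T_{n+1} \leq_{k_n} T_n$. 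Amalgamation preserves $\Poset(\mathcal V, {\bf T})$-membership because $\mathcal V$ is closed under finite intersection, and the bookkeeping forces each $D_n$ to be met by subconditions from $\mathfrak M$ predensely below $T_\omega$, so $T_\omega$ is $\mathfrak M$-generic.

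For the Weak Sacks Property, given $\tau\forces{\Poset(\mathcal V, {\bf T})}{\dot f:\omega\to\omega}$ and $g$ with $\lim_n g(n)=\infty$, choose a rapidly increasing sequence $L_0 < L_1 < \cdots$ and take $D_n$ to be the dense set of conditions deciding $\dot f\restriction L_n$. Run the fusion against these $D_n$, so at each fusion level $k_n$ the subconditions picked below the handled nodes decide $\dot f\restriction L_n$, and the collection of these finitely many decided values populates the slalom $F(\ell)$ for $\ell$ in the corresponding interval; $L_n$ is chosen sparsely enough that $g(\ell)$ dominates the bookkeeping count, and the fusion levels themselves serve as the required infinite set $A$. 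The main obstacle throughout is ensuring that $T_\omega$ is actually a condition; this is precisely what the Jalali-Naini--Talagrand characterization, mediated through Lemma~\ref{Hjhg65y}, provides, and it also explains why only the Weak Sacks Property is available here, since the slalom bound is secured only at fusion levels.
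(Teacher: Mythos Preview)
Your fusion-plus-game framework is exactly the paper's approach: it constructs a Player~1 strategy tree $\Sigma$ whose nodes carry conditions $T_\sigma$ with witnessing sets $A_\sigma \in \mathcal V$, then invokes Lemma~\ref{Hjhg65y} (note: Player~1 has no winning strategy, which is weaker than ``Player~2 has a winning strategy'') to extract a branch $B$ with $\bigcup_{n}\left(\bigcup B\right)(n) \in \mathcal V$; the limit $T^*$ built along $B$ is the fused condition. A presentational difference is that the paper refines \emph{all} nodes at the current level simultaneously rather than via one-node-at-a-time bookkeeping, which lets a single claim handle both properness and Weak Sacks.

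Your Weak Sacks bookkeeping has a slip: the fusion levels $k_n$ cannot themselves serve as the witnessing set $A$, because the slalom width at $k_n$ is $|T^*\restriction k_n| \leq |{\bf T}\restriction k_n|$, and there is no reason this is at most $g(k_n)$ --- the game, not you, selects $k_n$. The paper's remedy is to pre-compute $J(n)$ so large that $g(J(n)) > |{\bf T}\restriction n|$, take $D_n$ to be the conditions deciding $\dot f(J(n))$, and set $A = \{J(k_\sigma) : \sigma \in B\}$; then at each $J(k_\sigma)$ the number of possible values is at most $|T^*\restriction k_\sigma| \leq |{\bf T}\restriction k_\sigma| < g(J(k_\sigma))$. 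Your $L_n$ should be playing the role of this $J$, with $A$ taken inside the image rather than at the fusion levels themselves.
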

\begin{proof}
The first step in showing that $\Poset(\mathcal V, {\bf T})$ is proper and has the Weak Sacks Property is to establish the following:
\begin{claim}
Suppose that $T\in \Poset(\mathcal V, {\bf T})$, $\mathfrak M$ is
a countable,  elementary submodel of $\mathsf{H}({\kappa}^{+})$ for some uncountable $\kappa$ and that
$\{D_n\}_{n\in\omega}$ is an enumeration some dense subsets of $\Poset(\mathcal V, {\bf T})\cap\mathfrak M$. Then there is $T^*\subseteq T$ such that there are infinitely many $n\in\omega$
for which there is a family
$\{\bar{T}_t\}_{t\in T^*\restriction n}$ such that
for all $t\in T^*\restriction n$:
\begin{itemize}
\item $\bar{T}_t \in D_n\cap  \mathfrak M$
 \item $\bar{T}_t\subseteq T\angbr{t}$
 \end{itemize}
 and 
 $T^*\subseteq \bigcup_{t\in T^*\restriction n} \bar{T}_t$.
\end{claim}
\begin{proof}
It is routine to construct a tree $\Sigma\subseteq \left([Z]^{<\aleph_0}\right)^{<\omega}$ such that for each $\sigma\in \Sigma$ there are $A_\sigma \in \mathcal V\cap \mathfrak M$ and $T_\sigma\in \Poset(\mathcal V, {\bf T})\cap \mathfrak M$ such that:
\begin{enumerate}
\item $T_\varnothing = T$ 
\item $\suc_{\Sigma}(\sigma) = [A_\sigma]^{<\aleph_0}$
\item $A_\sigma$ witnesses that $T_\sigma\in \Poset(\mathcal V, {\bf T})$
\item if $k_\sigma = \max\left(\bigcup \ran(\sigma)\right)+1$
and $\sigma\subseteq \tau\in \Sigma$ then
$T_\tau\restriction k_\sigma = T_\sigma\restriction k_\sigma$
\item if $m\in \bigcup \ran(\sigma)$ then
$\suc_{T_\sigma}(t) = \suc_{\bf T}(t) $
for each $t\in T_\sigma\restriction m$
\item $k_\sigma\cap A_\sigma = \varnothing$
\item  $T_{\sigma^\frown a}\angbr{t^\frown s} \in D_{k_\sigma}$ for each
$a\in \suc_{\Sigma}(\sigma)$ and $t\in T_\sigma\restriction k_\sigma$ and
\begin{align*}
 s\in \suc_{T_\sigma}(t) = \suc_{\bf T}(t).
\end{align*}
\end{enumerate}
Given $\sigma\in \Sigma$ for which $T_\sigma$ and $ A_\sigma$
have already been constructed let $a\in [A_\sigma]^{<\aleph_0}$ and note that $k_{\sigma^\frown a} = \max(a)+1$. Since $T_\sigma\in \mathfrak M$
it must also be the case that $T_\sigma\angbr{t}\in \mathfrak M$
for each $t\in T_\sigma\restriction 
k_{\sigma^\frown a}$. Therefore 
it is possible to find $\bar{T}_t\subseteq T_\sigma\angbr{t}$ such that $\bar{T}_t\in D_{k_{\sigma}}\cap \mathfrak M$.
Letting $T_{\sigma^\frown a} =\bigcup_{t\in T_\sigma\restriction 
k_{\sigma^\frown a}}\bar{T}_t$ it is immediate that $T_{\sigma^\frown a} \in \mathfrak M$.
Let $\bar{A}_t \in \mathcal V\cap \mathfrak M$ witness that $\bar{T}_t\in \Poset(\mathcal V, \bf T)$ and define 
$$A_{k_{\sigma^\frown a}} = \bigcap_{t\in T_\sigma\restriction 
k_{\sigma^\frown a}}\bar{A}_t \setminus k_{\sigma^\frown a} $$
noting that $A_{k_{\sigma^\frown a}}\in \mathfrak M$ and
that $A_{k_{\sigma^\frown a}}$  witnesses that
$T_{\sigma^\frown a} \in \Poset(\mathcal V, \bf T)$.
It is now a simple matter to check that the induction hypotheses are all satisfied and that the inductive argument can be completed. 

Since 
{\sf Player 1} has no winning strategy in the game $\Game(\mathcal V)$
it follows that there is a branch $B\subseteq \Sigma$ such that
$A_B= \bigcup_{n\in\omega}\left(\bigcup B\right)(n)\in \mathcal V$. Furthermore, $T^* = \bigcup_{\sigma\in B} T_\sigma\restriction k_\sigma$ belongs to $\Poset(\mathcal V, \bf T)$ and this is witnessed by $A_B$.
By construction, for each $\sigma^\frown a\in B$
and $t\in T^*\restriction k_\sigma$ the following hold:
\begin{itemize}
\item $\bar{T}_t = T^*_{\sigma^\frown a}\angbr{t}\in D_{k_\sigma}\cap \mathfrak M$
\item $\bar{T}_t\subseteq T\angbr{t}$
\item $T^*\subseteq \bigcup_{t\in T^*\restriction k_\sigma} \bar{T}_t$.
\end{itemize}
Since the set $\{k_\sigma\}_{\sigma\in B}$ is infinite the claim has been established.
\end{proof}
The properness  of 
$\Poset(\mathcal V, \bf T)$
 follows immediately from the claim. Given $T\in \Poset(\mathcal V, \bf T)$ and an elementary submodel $\mathfrak M$ such that $T\in \mathfrak M$ let $\{D_n\}_{n\in\omega}$ enumerate all the dense subsets of $\Poset(\mathcal V, \bf T)$ in $\mathfrak M$ and let
 $D_n^*=\bigcap_{j\in n}D_j$. Applying the claim to
 $T$ and $\{D^*_n\}_{n\in\omega}$ yields $T^*\subseteq T$ that
 is $\mathfrak M$-generic.

 To see that $\Poset(\mathcal V, \bf T)$ satisfies the Weak Sack Property suppose that
  $$T\forces{\Poset(\mathcal V, \bf T)}{\dot{f}:\omega\to\omega}$$ and that $\lim_{n\to\infty} g(n) = \infty$.
 For each $n\in \omega$ let $J(n)$ be so large that
$g(J(n))> |{\bf T}\restriction n|$.
Then let $D_n$ be the set of all $T$ such that there is some $m_T$ so that
$$T\forces{\Poset(\mathcal V, \bf T)}{\dot{f}(J(\check{n})) = \check{m}_T} .$$
Applying the claim yields a $T^*\subseteq T$ and an infinite
$A\subseteq \omega$ such that $T^*\angbr{t}\in D_n$ for each $n\in A$ and $t\in T^*\restriction n$.
Letting $G(n) = \SetOf{m_{T^*\angbr{t}}}{t\in T^*\restriction n}$ it follows that
$$T^*\forces{\Poset(\mathcal V, \bf T) }{(\forall n\in A) \ \dot{f}(J(n)) \in G(n)}$$
and, moreover, $$|G(n)|\leq |T^*\restriction n|\leq |{\bf T}\restriction n| < g(J(n))$$ for each $n\in A$. Therefore $\{J(n)\}_{n\in A}$ witnesses that the appropriate instance of the Weak Sack Property holds. 
\end{proof}

It is worth remarking that if $\bf T$ is a tree such that
$\lim_{t\in \bf T}|\suc_{\bf T}(t)|= \infty$ then $\Poset(\mathcal V, \bf T) $ does not have the Sacks Property. To see this let $g:\omega\to \omega$ be the function  defined by
$$g(n) = \min_{t\in \bf T\restriction n}|\textstyle{\suc_{\bf T}(t)}| - 1$$
and note that $\lim_{n\to\infty}g(n) = \infty$.
However, if $G\subseteq \Poset(\mathcal V, \bf T) $ is generic then 
there is no $d\in \prod_{n\in\omega}[\omega]^{g(n)}$ such that
$\left(\bigcup\bigcap G\right)(n) \in d(n)$ for all $n$ since given any
$T\in \Poset(\mathcal V, \bf T) $ there is some $t\in T$ such that
$|\suc_T(t)| = |\suc_{\bf T}(t)|>g(|t|)$. 
It is then possible to find $s\in \suc_T(t)\setminus d(|t|)$ and then
$T\angbr{t^\frown s}\forces{\Poset(\mathcal V, \bf T)}{\left(\bigcup\bigcap G\right)(|t|) = s \notin d(|t|)}$.
\subsection{Adding a $(2,\aleph_0)$-selective ultrafilter}
\label{Lasg5r}This subsection will apply the partial order of \S\ref{SSag} to add a thick set $Y\subseteq [\omega]^2$ so that for a given partition of
$[\omega]^d= A_0 \cup A_1$ either $\lambda^{d-2}(Y)\subseteq A_0$ or 
$\lambda^{d-2}(Y)\subseteq A_1$. 
\begin{notat}
Given an indexed family of sets $\{S_x\}_{x\in X}$
define $\oplus_{x\in X}S_x = \bigcup_{x\in X}\{x\}\times S_x$. In particular $\oplus_{n\in\omega} n = \bigcup_{n\in\omega}\{n\}\times n$.
\end{notat}

 \begin{defin}\label{Dfif9tehgPo667}
If $d\geq 3$ and $P: \oplus_{n\in\omega} [n]^d \to 2$ and $j\in 2$ 
then for $A\subseteq \oplus_{n\in\omega}n$ define 
$${\bf L}_{P,m,j}(A) = \SetOf{H\subseteq m}{\{m\}\times H\subseteq A\And (\forall s\in [H]^d)) \  P(m,s) = j } $$
 let
 $\mu_{P,m,j}(A) = \max_{H\in {\bf L}_{P,m,j}(A)}(|H|)$.
 Denote  ${\bf L}_{P,m,j}(\oplus_{n\in\omega}n)$ by $ {\bf L}_{P,m,j}$.
 \end{defin}

\begin{defin}\label{Thjyg6}
If $\mathcal V$ is a filter 
on $\omega$
define  a filter $\mathcal U$ on $\oplus_{n\in\omega} n$
 to be $\mathcal V$-thick if
$$\SetOf{n\in\omega}{k\leq |\SetOf{j\in n}{(n,j)\in A}|}\in \mathcal V^+$$ for every $A\in \mathcal U$ and 
$k\in\omega$.
\end{defin}
\begin{notat}
Let $R^m(k)$ be the least  integer such that
$R^m(k)\rightarrow (k)^m_2$.
\end{notat}
\begin{lemma}\label{Desjhtyf6}
If
 $\mathcal U$ is a filter on $\oplus_{n\in\omega} n$
that is $\mathcal V$-thick
and $P: \oplus_{n\in\omega} [n]^d \to 2$
then there is $J\in 2$ such that
$$(\forall A\in \mathcal U)(\forall k\in\omega) \ \SetOf{m\in \omega}{\mu_{P,m,J}(A)\geq k}\in \mathcal V^+$$
\end{lemma}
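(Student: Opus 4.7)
The plan is to prove this by contradiction using Ramsey's theorem together with the $\mathcal{V}$-thickness hypothesis. Suppose that neither $J=0$ nor $J=1$ satisfies the conclusion. Then there exist $A_{0},A_{1}\in\mathcal{U}$ and $k_{0},k_{1}\in\omega$ such that for each $j\in 2$, the set $\{m\in\omega : \mu_{P,m,j}(A_{j})\geq k_{j}\}$ fails to lie in $\mathcal{V}^{+}$, equivalently its complement belongs to $\mathcal{V}$.

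Next I would pass to the common witness. Set $A=A_{0}\cap A_{1}\in\mathcal{U}$ and $k=\max(k_{0},k_{1})$. By monotonicity of $\mu$, if $\mu_{P,m,j}(A)\geq k$, then the same witnessing set $H$ also shows that $\mu_{P,m,j}(A_{j})\geq k_{j}$, since $A\subseteq A_{j}$ and $k\geq k_{j}$. Consequently, for each $j\in 2$,
\begin{equation*}
\{m\in\omega : \mu_{P,m,j}(A)\geq k\}\;\subseteq\;\{m\in\omega : \mu_{P,m,j}(A_{j})\geq k_{j}\},
\end{equation*}
so the complement of the left-hand set contains the complement of the right-hand one, and is therefore an element of $\mathcal{V}$. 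Taking the intersection over $j\in 2$ yields a set $B\in\mathcal{V}$ such that $\mu_{P,m,0}(A)<k$ and $\mu_{P,m,1}(A)<k$ hold simultaneously for every $m\in B$.

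The final step is to combine this with $\mathcal{V}$-thickness and Ramsey's theorem to produce a contradiction. Since $\mathcal{U}$ is $\mathcal{V}$-thick and $A\in\mathcal{U}$, applying Definition~\ref{Thjyg6} with the integer $R^{d}(k)$ gives
\begin{equation*}
C\;=\;\bigl\{m\in\omega : \bigl|\{j<m:(m,j)\in A\}\bigr|\geq R^{d}(k)\bigr\}\in\mathcal{V}^{+}.
\end{equation*}
As $B\in\mathcal{V}$ and $C\in\mathcal{V}^{+}$, the intersection $B\cap C$ is nonempty; pick any $m\in B\cap C$. Then the fibre $E_{m}=\{j<m:(m,j)\in A\}$ has size at least $R^{d}(k)$, so by definition of $R^{d}(k)$ the coloring $s\mapsto P(m,s)$ on $[E_{m}]^{d}$ admits a monochromatic set $H\subseteq E_{m}$ of size $k$, in some color $j_{*}\in 2$. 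This $H$ shows $\mu_{P,m,j_{*}}(A)\geq k$, contradicting $m\in B$. Hence one of $J=0$ or $J=1$ must satisfy the conclusion universally in $A$ and $k$.

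The main point to be careful about is the quantifier order: the $J$ we produce must work for \emph{all} $A\in\mathcal{U}$ and all $k$ at once, and this is what makes the contradiction hypothesis natural to combine via intersection (which relies on $\mathcal{U}$ being a filter) and on the common bound $k=\max(k_{0},k_{1})$. No other obstacle arises, since Ramsey's theorem and the definition of $\mathcal{V}$-thickness are invoked in a completely direct way.
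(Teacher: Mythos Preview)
Your proof is correct and follows essentially the same route as the paper's: assume failure for both colours, pass to a single $A\in\mathcal U$ and $k$ on which both failures are witnessed, and then use $\mathcal V$-thickness together with Ramsey's theorem at a suitable $m$ to obtain a contradiction. The paper compresses the passage from $(A_0,k_0)$ and $(A_1,k_1)$ to a single $(A,k)$ into a single line, while you spell out the monotonicity and intersection step explicitly; apart from this level of detail the arguments are the same.
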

\begin{proof}
	If the lemma fails then there are $A\in \mathcal U$ and $k\in\omega$ such that $$Z= \SetOf{m\in\omega}{\mu_{P,m,0}(A)< k}\cap \SetOf{m\in\omega}{\mu_{P,m,1}(A)< k} \in \mathcal V .$$  However, since
	$\mathcal U$ is  $\mathcal V$-thick
	there must be some $m\in Z$ such that
	$$R^d(k)< |\SetOf{\ell\in m}{(m,\ell)\in A}| . $$
	But then by Ramsey's Theorem there is some $j\in 2$ and $H\subseteq \SetOf{\ell\in m}{(m,\ell)\in A}$ such that $|H|\geq k$ and $P(m,s) = j$ for all $s\in [H]^d$.
	Hence $\mu_{P,m,j}(A)\geq |H|\geq k$
	contradicting that $m\in Z$.
	\end{proof}
\begin{defin}\label{DefDeds491687}
If
$\mathcal U$ is a filter on $\oplus_{n\in\omega} n$
that is $\mathcal V$-thick
and $P: \oplus_{n\in\omega} [n]^d \to 2$
then use Lemma~\ref{Desjhtyf6} to let $J\in 2$ be the least such that
$$(\forall A\in \mathcal U)(\forall k\in\omega) \ \SetOf{m\in \omega}{\mu_{P,m,J}(A)\geq k}\in \mathcal V^+ $$	and define ${\bf T}_{\mathcal U, \mathcal V, P}$ to be the tree
$$\bigcup_{n\in\omega}\prod_{m\in n}{\bf L}_{P,m,J} .$$

	\end{defin}	
	
	\begin{corol}\label{corjwithdd5}
	If 
	$\mathcal U$ is a filter on $\oplus_{n\in\omega} n$
	that is $\mathcal V$-thick
	and $P: \oplus_{n\in\omega} [n]^d \to 2$ then
	$G\subseteq \Poset(\mathcal V, {\bf T}_{\mathcal U, \mathcal V, P})$ is generic
	then  $\{\bigcup\bigcap G \}\cup \mathcal U$ generates a $\mathcal V$-thick filter and 
	$P(m,s) = J$ for each	$m\in\omega$ and $s\in \left[\bigcap G(m)\right]^3$.
	
		\end{corol}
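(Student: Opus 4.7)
The plan is to handle the two conclusions separately: the second is immediate from the definition of ${\bf T}_{\mathcal U, \mathcal V, P}$, while the first requires a density argument that exploits the defining property of $J$ from Lemma~\ref{Desjhtyf6} to locate, given any condition $T$ and any target $V \in \mathcal V$, $B \in \mathcal U$, $k \in \omega$, a level $m \in V$ at which $T$ can be refined.

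For the second conclusion, the generic $G$ produces a branch $\bigcap G$ through ${\bf T}_{\mathcal U, \mathcal V, P} = \bigcup_n \prod_{m \in n} {\bf L}_{P, m, J}$, so $\bigcap G(m) \in {\bf L}_{P, m, J}$ for each $m$. The definition of ${\bf L}_{P, m, J}$ then forces $P(m, s) = J$ for every $d$-subset $s$ of $\bigcap G(m)$ (I take the ``$3$'' in the statement to be a typo for ``$d$''), and nothing further is required.

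For the first conclusion, interpret $\bigcup \bigcap G$ as the set $Y := \bigcup_m \{m\} \times \bigcap G(m) \subseteq \oplus_n n$, and for $B \subseteq \oplus_n n$ write $B_m = \{j : (m, j) \in B\}$. Every member of the filter generated by $\{Y\} \cup \mathcal U$ contains $Y \cap B$ for some $B \in \mathcal U$, so $\mathcal V$-thickness follows once it is shown that
$$\left\{m : \left|\bigcap G(m) \cap B_m\right| \geq k\right\} \in \mathcal V^+$$
for every $B \in \mathcal U$ and every $k \in \omega$. By genericity this reduces to showing that for each $V \in \mathcal V$ the set of conditions forcing the existence of some $m \in V$ with $|\bigcap G(m) \cap B_m| \geq k$ is dense. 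Given $T$ with branching witness $W \in \mathcal V$, Lemma~\ref{Desjhtyf6} applied through Definition~\ref{DefDeds491687} places $X := \{m : \mu_{P, m, J}(B) \geq k\}$ in $\mathcal V^+$, so $V \cap W \cap X \neq \varnothing$. Fix $m$ in this intersection and $H \in {\bf L}_{P, m, J}(B)$ of size at least $k$; then $H \in {\bf L}_{P, m, J}$ and $H \subseteq B_m$. Since $m \in W$ gives $\suc_T(t) = {\bf L}_{P, m, J}$ for every $t \in T \restriction m$, each $t^\frown H$ lies in $T$, and the subtree $T' := \bigcup_{t \in T \restriction m} T\angbr{t^\frown H}$ forces $\bigcap G(m) = H$, hence $|\bigcap G(m) \cap B_m| \geq k$.

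The only technical verification is that $T'$ remains a condition. Since $T'$ agrees with $T$ on successors at every level other than $m$, the set $W \setminus \{m\}$ witnesses $T' \in \Poset(\mathcal V, {\bf T}_{\mathcal U, \mathcal V, P})$, using only that $\mathcal V$ is non-principal — a consequence of being a non-meagre P-filter. This is the one place where the standing hypotheses on $\mathcal V$ enter the proof beyond Lemma~\ref{Desjhtyf6}.
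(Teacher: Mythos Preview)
Your proof is correct and follows essentially the same approach as the paper: both reduce $\mathcal V$-thickness to a density argument, use the defining property of $J$ to find a level $m$ in the intersection of the given $V \in \mathcal V$, the branching witness for $T$, and the set where $\mu_{P,m,J}$ of the given $\mathcal U$-set is large, and then choose a homogeneous $H$ as the successor at level $m$. The only cosmetic difference is that the paper passes to $T\angbr{t^\frown H}$ for a single $t \in T\restriction m$ rather than your union $\bigcup_{t \in T\restriction m} T\angbr{t^\frown H}$; both yield conditions below $T$ forcing the required witness, and your explicit treatment of the homogeneity conclusion and of why $T'$ remains a condition are details the paper leaves implicit.
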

\begin{proof}
	Let $U\in \mathcal U$, $k\in \omega$ and $V\in \mathcal V$. It must be shown that 
		\begin{equation}\label{kjas6}
	 \SetOf{n\in\omega}{k\leq \left|\SetOf{j\in n}{(n,j)\in U\cap \bigcup\bigcap G}\right|}\cap V\neq \varnothing .
	\end{equation}
	To this end suppose that  $T\in \Poset(\mathcal V, {\bf T}_{\mathcal U, \mathcal V, P})$ and this is witnessed by $B\in \mathcal V$. 
	It follows from Definition~\ref{DefDeds491687} that 
	$$ \SetOf{m\in \omega}{\mu_{P,m,J}(U)\geq k}\cap V\cap B\neq \varnothing $$
	 and so there is 
	$m\in B\cap V$ and $t\in T\restriction m$ such that
	$\suc_T(t) = {\bf L}_{P,m,J}$.  Since $\mu_{P,m,J}(U)\geq k$ there is some $H\subseteq m$ such that
	\begin{itemize}
		\item $|H| = k$ 
		\item  $P(m,s) = J$ for all $s\in [H]^d$ 
		\item 
	$\{m\}\times H\subseteq U$.
	\end{itemize}
	Then $H\in {\bf L}_{P,m,J}$ and so $$T\angbr{t^\frown H}\forces{\Poset(\mathcal V, {\bf T}_{\mathcal U, \mathcal V, P})}{H\subseteq \left\{\bigcup\bigcap G \right\} }$$
	and since $\{m\}\times H\subseteq U$ it follows that $m$ witnesses that Inequality~(\ref{kjas6}) holds.
	\end{proof}

Corollary~\ref{corjwithdd5} can now be used in conjunction
with the proof of Corollary~\ref{CorNoNowjlhMyV} to obtain a model with no nowhere dense ultrafilters, but still containing a $(2,\aleph_0)$-selective ultrafilter.
\begin{corol}
	\label{laj7}
	It is consistent that there is a  $(2,\aleph_0)$-selective ultrafilter yet there are no nowhere dense ultrafilters.
\end{corol}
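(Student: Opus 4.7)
The plan is to combine the countable-support iteration scheme of Corollary~\ref{CorNoNowjlhMyV} with iterands of the form $\Poset(\mathcal{V}, {\bf T}_{\mathcal{U}_{\alpha}, \mathcal{V}, P_{\alpha}})$ from Lemma~\ref{Lakk6hy7} and Corollary~\ref{corjwithdd5} so as to simultaneously destroy every candidate nowhere dense ultrafilter and assemble a $(2, \aleph_0)$-selective ultrafilter $\mathcal{U}$ on $\oplus_{n \in \omega} n \cong [\omega]^2$. Starting from a ground model $V$ of $\mathrm{CH} + \lozenge_{\omega_2, \omega_1}$ containing a P-point $\mathcal{V}$ on $\omega$, I would build a countable-support iteration $\{\Poset_\alpha\}_{\alpha \in \omega_2}$ whose iterands come in two kinds. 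At stage $\alpha$, the diamond sequence $\{D_\alpha\}_{\alpha \in \omega_2}$ is used to decide: if $D_\alpha$ codes a $\Poset_\alpha$-name for a maximal ideal, set $\Poset_{\alpha+1} = \Poset_\alpha \ast \Poset(D_\alpha)$ as in Corollary~\ref{CorNoNowjlhMyV}; if $D_\alpha$ codes names for a dimension $d_\alpha \geq 3$ and a partition $P_\alpha : \oplus_{n \in \omega}[n]^{d_\alpha} \to 2$, set $\Poset_{\alpha+1} = \Poset_\alpha \ast \Poset(\mathcal{V}, {\bf T}_{\mathcal{U}_\alpha, \mathcal{V}, P_\alpha})$, where $\mathcal{U}_\alpha$ is the $\mathcal{V}$-thick filter generated by the thick sets added at earlier selectivity stages together with the Fr\'echet filter on $\oplus_{n\in\omega} n$.

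Each iterand is proper and has the Weak Sacks Property --- the first type by Corollaries~\ref{CCOOR777hDrh75} and \ref{CCOOR777hDrh75A}, the second by Lemma~\ref{Lakk6hy7} --- so Theorems~\ref{ShePrewws} and \ref{ShePrewwsA} yield the Weak Sacks Property for every initial segment and every tail quotient $\Poset(D_\alpha) \ast \Poset_{\omega_2}/\Poset_{\alpha+1}$, respectively $\Poset(\mathcal{V}, {\bf T}_{\mathcal{U}_\alpha, \mathcal{V}, P_\alpha})\ast \Poset_{\omega_2}/\Poset_{\alpha+1}$. In particular every tail is $\omega^\omega$-bounding, so Corollary~\ref{Fhvcttar5} keeps $\mathcal{V}$ non-meagre in each $V[G \cap \Poset_\alpha]$, while a standard fusion argument modelled on the proof of Lemma~\ref{Lakk6hy7} will be used to show that the P-filter property of $\mathcal{V}$ is preserved under Weak Sacks iterands; together these guarantee that Lemma~\ref{Lakk6hy7} applies at every selectivity stage and that Corollary~\ref{corjwithdd5} then delivers a $P_\alpha$-homogeneous thick set $Y_\alpha$ to be thrown into $\mathcal{U}_{\alpha+1}$. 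Applying Theorem~\ref{NoNowjlhMyV} in $V[G \cap \Poset_\alpha]$ at each maximal-ideal stage handles nowhere dense ultrafilters exactly as in the proof of Corollary~\ref{CorNoNowjlhMyV}, while the $\lozenge_{\omega_2, \omega_1}$ bookkeeping, partitioned into two cofinal pieces of cofinality $\omega_1$, catches every $\Poset_{\omega_2}$-name both for a maximal ideal and for a partition $P$ of some $\oplus_n [n]^d$ into two pieces. This makes $\mathcal{U} = \bigcup_\alpha \mathcal{U}_\alpha$ a thick ultrafilter on $[\omega]^2$ that is $(2, d)$-selective for every $d \geq 2$, i.e.\ $(2, \aleph_0)$-selective, while the final model contains no nowhere dense ultrafilter.

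The main technical obstacle is the parallel preservation of $\mathcal{V}$ as a non-meagre P-filter through a countable-support iteration whose iterands merely enjoy the Weak Sacks Property rather than the full Sacks Property; non-meagreness comes essentially for free from Corollary~\ref{Fhvcttar5}, but preservation of the P-filter character of $\mathcal{V}$ requires a dedicated preservation theorem, established by a fusion argument parallel to (and compatible with) the one proving Lemma~\ref{Lakk6hy7}. A secondary but routine obstacle is verifying that the $\mathcal{V}$-thick filter $\mathcal{U}_\alpha$ fed into each selectivity stage is still $\mathcal{V}$-thick in the intermediate extension, which again reduces to the fact that each tail quotient is $\omega^\omega$-bounding and preserves positivity of the relevant column-width sets.
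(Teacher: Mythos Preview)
Your overall architecture matches the paper's proof almost exactly: start from $\mathrm{CH}+\lozenge_{\omega_2,\omega_1}$ with a ground-model P-point $\mathcal V$, run a countable-support iteration of length $\omega_2$ that alternates between the $\Poset(D_\alpha)$ iterands (to kill candidate nowhere dense ultrafilters via Theorem~\ref{NoNowjlhMyV}) and the $\Poset(\mathcal V,{\bf T}_{\mathcal U_\alpha,\mathcal V,P_\alpha})$ iterands (to homogenize partitions via Corollary~\ref{corjwithdd5}), and preserve the Weak Sacks Property throughout by Theorems~\ref{ShePrewws} and~\ref{ShePrewwsA}. The bookkeeping is organized slightly differently---the paper reserves the $\lozenge_{\omega_2,\omega_1}$ sequence exclusively for guessing maximal ideals at stages of cofinality $\omega_1$, and handles the partitions $P_\alpha$ by a plain cofinal enumeration over the stages of cofinality $\neq\omega_1$---but your variant would work as well.

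The one place where you are working harder than necessary is what you call the ``main technical obstacle'': preservation of $\mathcal V$ as a P-filter. You propose a dedicated fusion-style preservation theorem tailored to Weak Sacks iterands, but in fact \emph{properness alone} already does this. If $\Poset_\alpha$ is proper, then every countable family $\{B_n\}_{n\in\omega}\subseteq\mathcal V$ in the extension is covered by a ground-model countable family from $\mathcal V$, which has a pseudo-intersection in $\mathcal V$ because $\mathcal V$ was a P-point in $V$; hence the filter generated by $\mathcal V$ in $V[G\cap\Poset_\alpha]$ is a P-filter. The paper records exactly this in one line. Combined with Corollary~\ref{Fhvcttar5} for non-meagreness, Lemma~\ref{Lakk6hy7} then applies at every selectivity stage with no further work. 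So your plan is correct, but the obstacle you flag as requiring new machinery is in fact immediate.
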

\begin{proof}
	As in the proof of Corollary~\ref{CorNoNowjlhMyV}
	let $V$ be a model of $2^{\aleph_0} = \aleph_1$ such that $\lozenge_{\omega_2,\omega_1}$ holds and this is witnessed by $\{D_\alpha\}_{\alpha\in\omega_2} $.
	Suppose also that  $\mathcal V$ is a P-point in the ground model $V$. Let
	$W= \SetOf{\alpha\in\omega_2}{\cof{\alpha}\neq \omega_1}$ and let $\{\dot{P}_\alpha\}_{\alpha\in W}$ enumerate cofinally often the set
	$$\SetOf{\dot{P}}{\mathbb Q\in [\mathsf{H}(\aleph_2)]^{\aleph_1}\And {\sf 1}\forces{\mathbb Q}
		{\dot{P}: \oplus_{n\in\omega}[n]^d \to 2}} .$$
	Construct a countable support iteration	 $\{\Poset_\alpha\}_{\alpha\in\omega_2}$ and $\dot{\mathcal U}_\alpha$  such that for each $\alpha\in \omega_2$:
	\begin{enumerate}
		\item \label{Onh111}$\mathcal U_0$ is the filter on $[\oplus_{n\in\omega}n]^2 $ generated by $\oplus_{n\in\omega}[n]^2$ and the co-finite subsets of $[\oplus_{n\in\omega}n]^2$
	\item \label{CerHerd4}	$\mathbb P_\alpha\in [{\mathsf H}(\aleph_2)]^{\aleph_1}$
	\item\label{Cert3} ${\sf 1}\forces{\Poset_\alpha}{\dot{\mathcal U}_\alpha \text{ is a $\mathcal V$-thick filter on } [\oplus_{n\in\omega}n]^2}$
	\item\label{Cert34} ${\sf 1}\forces{\Poset_\alpha}{(\forall \beta \in \alpha) \ \dot{\mathcal U}_\alpha \supseteq \dot{\mathcal U}_\beta}$
	\item  if $\cof(\alpha) = \omega_1$ and  $D_\alpha$
	is a $\Poset_\alpha$ name such that ${\sf 1}\forces{\Poset_\alpha}{D_\alpha\text{ is a maximal ideal}}$ then
	$\Poset_{\alpha+1} = \Poset_\alpha*\Poset(D_\alpha)$ where $\Poset(D_\alpha)$ is as in Definition~\ref{Sheas31ah7ib}
	\item   if $\cof(\alpha)\neq \omega_1$ and 
	${\sf 1}\forces{\Poset_\alpha}{\dot{P}_\alpha: \oplus_{n\in\omega}[n]^{d_\alpha} \to 2}$
then
	 $\Poset_{\alpha+1} = \Poset_\alpha* \Poset(\mathcal V, {\bf T}_{\mathcal U_\alpha, \mathcal V, P_\alpha})$
	 \item if none of the preceding hold then
	 $\Poset_{\alpha+1} = \Poset_\alpha$.
	
	\end{enumerate} 
	It will first be shown by recursion on $\alpha$ that each $\Poset_\alpha$ is proper and enjoys the Weak Sacks Property.
	
	If $\cof(\alpha) = \omega_1$ and $\Poset_\alpha$ is proper and satisfies the Weak Sacks Property
	and $${\sf 1}\forces{\Poset_\alpha}{D_\alpha \text{ is a maximal ideal on }\omega}$$
	 then
	it follows from Theorem~\ref{ShePrewwsA}, Corollary~\ref{CCOOR777hDrh75} and Corollary~\ref{CCOOR777hDrh75A} 
	that $\Poset_{\alpha+1} =\Poset_\alpha\ast \Poset(D_\alpha)$ is proper and has the Weak Sacks Property and, of course, (\ref{CerHerd4}) holds.  In this case let $\dot{\mathcal U}_{\alpha+1} = \dot{\mathcal U}_\alpha$ and note that $\mathcal U_{\alpha+1}$  satisfies (\ref{Cert3}) and (\ref{Cert34})
	holds.
	
	If $\cof(\alpha) \neq \omega_1$ and $\Poset_\alpha$ is proper and satisfies the Weak Sacks Property
	and 
		$${\sf 1}\forces{\Poset_\alpha}{\dot{P}_\alpha: \oplus_{n\in\omega}[n]^{d_\alpha} \to 2}$$
	then
	an important point to note is that
	Corollary~\ref{Fhvcttar5} guarantees that $\mathcal V$
	generates a non-meagre filter after forcing with $\Poset_\alpha$. 
 And the fact that ${\mathbb{P}}_{\alpha}$ is proper guarantees that the filter generated by $\mathcal{V}$ after forcing with ${\mathbb{P}}_{\alpha}$ is a P-filter.
 It then follows from Lemma~\ref{Lakk6hy7} that
	$\Poset_{\alpha+1} =\Poset_\alpha\ast \Poset(\mathcal V, {\mathbf T}_{\mathcal U_\alpha, \mathcal V, P_\alpha})$ is proper and has the Sacks Property and, as before, (\ref{CerHerd4}) holds.
	 In this case let $\dot{G}_\alpha$ be a name for the generic subset of
	 $\Poset(\mathcal V, {\mathbf T}_{\mathcal U_\alpha, \mathcal V, P_\alpha})
	 $
and let	   $\dot{\mathcal U}_{\alpha+1} $ be a name for the filter generated by $\dot{\mathcal U}_\alpha\cup \{\bigcup\bigcap \dot{G}_\alpha\}$ and 
use Corollary~\ref{corjwithdd5} to conclude that
(\ref{Cert3}) holds. It is immediate that (\ref{Cert34}) also holds.

If $\alpha$ is a limit and $\Poset_\beta$ and $\mathcal U_\beta $ have been defined for all $\beta \in \alpha$ and satisfy the induction hypothesis, let
$\Poset_\alpha$ be the countable support limit of the $\Poset_\beta$.
It follows from Theorem~\ref{ShePrewws} that $\Poset_\alpha$ is proper and has the Weak Sacks Property.
	Since $\mathrm{CH}$ holds in $V$ it also follows that (\ref{CerHerd4}) holds.  
	Since $\Poset_\beta$ is completely embedded in $\Poset_\alpha$ it is possible to consider each $\mathcal U_\beta$ as a $\Poset_\alpha$-name
	 and to let $\dot{\mathcal U}_\alpha$ be the $\Poset_\alpha$-name  $ \bigcup_{\beta\in \alpha}\dot{\mathcal U}_\beta$.
	It is routine to see that $\dot{\mathcal U}_\alpha $
	satisfies (\ref{Cert3}) and (\ref{Cert34}).

The proof that forcing with $\Poset_{\omega_2}$ produces a model with no nowhere dense ultrafilters is now exactly the same as the proof of Corollary~\ref{CorNoNowjlhMyV}. 
	If $G\subseteq \Poset_{\omega_2}$ is generic let  $\mathcal U = \bigcup_{\alpha\in\omega_2}\dot{\mathcal U}_\alpha[G] $. To see that $\mathcal U$ is $(2,\aleph_0)$-selective in the model $V[G]$ let
	$\bar{P}:[\oplus_{n\in\omega}n]^d\to 2$.
	 Because of (\ref{Onh111}) the filters
	$\dot{\mathcal U}_\alpha[G]$ can be thought of as filters on $\oplus_{n\in\omega}[n]^2$ and so let
	$P = \bar{P}\restriction \oplus_{n\in\omega}[n]^d$. Then since $\Poset_{\omega_2}$ has the $\aleph_2$ chain condition there is some $\alpha\in \omega_2$ such that $P$ has a $\Poset_\alpha$-name $\dot{P}$. Let $\beta\in W\setminus \alpha$ be such that $\dot{P}_\beta = \dot{P}$. Let  ${G}_\beta$ be the generic subset of
	$\Poset(\mathcal V, {\mathbf T}_{\mathcal U_\beta, \mathcal V, P_\beta})
	$ obtained from $G$. Then by construction $\bigcup\bigcap {G}_\beta\in \mathcal U$.
	Moreover, $\dot{P}_\beta[G_\beta] = \dot{P}[G_\beta] = P$ and by Corollary~\ref{corjwithdd5}
	it follows that  and
	$P(m,s) = J$ for each	$m\in\omega$ and $s\in \left[\bigcap G_\beta(m)\right]^d$. In other words,
	$\lambda^{d-2}(\bigcup\bigcap G_\beta)\in \lambda^{d-2}(\mathcal U)$ and $\lambda^{d-2}(\bigcup\bigcap G_\beta)$ is homogeneous for $P$ and, hence, also for $\bar{P}$. Since $\bar{P}$
	was arbitrary it follows that $\lambda^{d-2}(\mathcal U)$ is an ultrafilter, as required.
\end{proof}
\subsection{Final remarks}
An obvious corollary to Corollary~\ref{laj7} is that the existence of a $(2,3)$-selective ultrafilter
is quite a weak hypothesis since even the stronger hypothesis of the existence of a $(2,\aleph_0)$-selective ultrafilter   does not  imply the existence of a nowhere dense ultrafilter.  This should be considered in the context of the results of \cite{MR3946661} which included examples of $(2,3)$-selective ultrafilters that are not P-points. However, since these examples were constructed using $\mathrm{CH}$, they did not rule out the existence of nowhere dense ultrafilters, indeed, not even the existence of P-points. It may therefore be worthwhile repeating the following question originally raised in \cite{MR3946661}.
\begin{quest}\label{Q1}
	Is it consistent that there are no $(2,3)$-selective ultrafilters?
	\end{quest}
	
	Probably not much easier is the following question.	
	\begin{quest}\label{Q2}
		Is it consistent that there are no $(2,\aleph_0)$-selective ultrafilters?
	\end{quest}
	
	Of technical interest is the observation the $(2,\aleph_0)$-selective ultrafilter constructed in Corollary~\ref{laj7} has the additional property that is contains a set of the form $\SetOf{[x]^2}{x\in \mathcal X}$ where $\mathcal X$ is an infinite family of pairwise disjoint finite sets. This raises the following question, which may be easier than either
	Question~\ref{Q1} or Question~\ref{Q2}.
	
	\begin{quest}
		Let $\mathcal J$ be the ideal on $[\omega]^2$ generated by sets of the form $\SetOf{[x]^2}{x\in \mathcal X}$ where $\mathcal X$ is a family of pairwise disjoint finite sets. Is it consistent that $\mathcal J\subseteq \mathcal U^*$ for every $(2,\aleph_0)$-selective ultrafilter $\mathcal U$ on $[\omega]^2$? In other words, is it consistent that there are no $(2,\aleph_0)$-selective ultrafilters of the type constructed in Corollary~\ref{laj7}?
		\end{quest}
	
	It needs to be emphasized that the results presented here have not ruled out the possibility that Shelah's model of \cite{MR1690694}, or one of its variations, might answer Question~\ref{Q1} in the positive. What has been shown is only that any argument establishing this will be quite delicate because it will have to eliminate the possibility of interleaving partial orders with very similar properties into the iteration as is done in \S\ref{Lasg5r}.

\providecommand{\bysame}{\leavevmode\hbox to3em{\hrulefill}\thinspace}
\providecommand{\MR}{\relax\ifhmode\unskip\space\fi MR }
\providecommand{\MRhref}[2]{%
  \href{http://www.ams.org/mathscinet-getitem?mr=#1}{#2}
}
\providecommand{\href}[2]{#2}

\end{document}